\DeclareMathOperator*{\argmindisp}{argmin}
\newcommand{\argmin}{{\rm argmin}}
\newcommand{\prox}{{\rm prox}}
\newcommand{\ps}[1]{\left\langle #1 \right\rangle}
\newcommand{\R}{\mathbb{R}}
\newcommand{\dist}{d}
\newcommand{\Pos}{{\mathrm{Pos}}_3}
\newcommand{\trace}{\mathrm{trace}}
\newcommand{\deltaSNR}{\mathrm{\Delta SNR}}
\newcommand{\Tr}{{\rm trace}}
\def\figspace{\vspace{-0.55cm}}
\title{Total variation regularization for manifold-valued data}
\author{The Author}
\author{Andreas Weinmann\footnotemark[2], Laurent Demaret\footnotemark[2], and Martin Storath\footnotemark[3]}
\date{{ \today}}
\begin{document}
\renewcommand{\thefootnote}{\fnsymbol{footnote}}
\footnotetext[2]{
Helmholtz Center Munich, and Department of Mathematics, Technische Universit\"at M\"unchen (Germany), 
andreas.weinmann@tum.de,
laurent.demaret@helmholtz-muenchen.de}

\footnotetext[3]{{Biomedical Imaging Group, École Polytechnique Fédérale de Lausanne (Switzerland), martin.storath@epfl.ch}}

\maketitle

\begin{abstract}
We consider total variation minimization for manifold valued data.
We propose a cyclic proximal point algorithm and a parallel proximal point algorithm to minimize TV functionals with $\ell^p$-type data terms in the manifold case.
These algorithms are based on iterative geodesic averaging which makes them easily applicable to a large class of data manifolds.
As an application, we consider denoising images which take their values in a manifold.
We apply our algorithms to diffusion tensor images, interferometric SAR images as well as sphere and cylinder valued images.    
For the class of Cartan-Hadamard manifolds (which includes the data space in diffusion tensor imaging) 
we show the convergence of the proposed TV minimizing algorithms to a global minimizer. 
\end{abstract}



\section{Introduction}


Data taking values in a manifold appear naturally in various signal and image processing applications. One example is diffusion tensor imaging
where the data live in the Riemannian manifold of positive (definite) matrices; see, e.g., \cite{basser1994mr,pennec2006riemannian}. 
Other examples are color images based on non-flat color models \cite{chan2001total,vese2002numerical,kimmel2002orientation}.
Here the data has circle- or sphere-valued components. Data with values on the circle also appear in the context of interferometric SAR images \cite{massonnet1998radar}.
$SO(3)$ and motion group-valued data were considered in \cite{rahman2005multiscale}.  

Processing manifold valued data has gained a lot of interest in recent years. To mention only some examples,
wavelet-type multiscale transforms for manifold data have been considered in
\cite{rahman2005multiscale,grohs2009interpolatory, weinmann2012interpolatory}. Manifold valued partial differential equations are the subject of study in the papers 
\cite{tschumperle2001diffusion, chefd2004regularizing, grohs2013optimal}. Furthermore, statistical issues on Riemannian manifolds 
are topic of \cite{fletcher2012,fletcher2007riemannian,fletcher2004principal}.  

The present paper deals with total variation (TV) minimization for data taking values in a manifold.
Our main application is the denoising of such manifold-valued data. 
For scalar data, TV minimization was shown to be a powerful tool for edge-preserving denoising \cite{rudin1992nonlinear}. 
For images, the anisotropic version of TV minimization is given by  
\begin{equation} \label{eq:abstract_min_problem_l1_tv_manifold}
 \argmindisp_{x \in M^{n\times m}} \Big\{ \frac{1}{p} \sum_{i,j =1}^{n,m}  d(x_{ij},f_{ij})^p +  \alpha \sum_{i,j=1}^{n-1,m} d(x_{ij},x_{i+1,j}) +  \alpha \sum_{i,j=1}^{n,m-1} d(x_{ij},x_{i,j+1})
 \Big\}.
\end{equation}
For scalar data, the symbol $d(y,z) = |y-z|$ simply denotes the euclidean distance between $y$ and $z.$
The distance to data $f$ is measured in the $\ell^p$ norm. For Gaussian noise, $p=2$ is reasonable, whereas, for 
noise types with heavier tails such as Laplacian noise, $p=1$ is more natural.
The natural generalization of the total variation problem for data on a manifold $M$ is given by
using the distance $d$ induced by the Riemannian metric on the manifold 
instead of the euclidean distance in \eqref{eq:abstract_min_problem_l1_tv_manifold}. 

We introduce algorithms to minimize the $\ell^p$-TV functional for manifold data and
show convergence towards a (global) minimizer for the class of Cartan-Hadamard manifolds where the data space in diffusion tensor imaging is a particular member of.
Our experiments show the denoising capability of $\ell^p$-TV minimization in the manifold context.

\subsection{Total variation regularization for scalar and vector-valued data}

Total variation regularization was first introduced in the early 1990ies by Rudin, Osher and Fatemi \cite{rudin1992nonlinear}.
A central advantage of total variation regularization compared with classical Tykhonov regularization is that it preserves sharp edges \cite{strong2003edge,gousseau2001natural}.
Especially due to this property TV minimization has been used in a vast amount of applications. Examples are 
biomedical imaging \cite{dey2006richardson}, geophysics \cite{anagaw2012edge}
and computer vision \cite{zach2007duality,chen2006total}, to mention only a few.

Theoretical properties of total variation regularization have been investigated in a series of papers. For instance, results on existence and uniqueness of minimizers have been proved in \cite{chambolle1997image}.
Connections to wavelet shrinkage are shown in \cite{petrushev1999nonlinear}  
and equivalences between diffusion techniques, thresholding strategies and TV minimization be found in \cite{steidl2004equivalence}.

A lot of different algorithms for TV minimization of scalar- and vector-valued images have been proposed in the last 20 years. 
In their original work, Rudin, Osher, Fatemi \cite{rudin1992nonlinear} consider $\ell^2$ data terms. 
They use gradient descent on the Euler-Lagrange equations of the (scalar-valued) total variation functional.
Further methods are based on Fenchel duals \cite{chambolle2004algorithm},
the alternating direction method of multipliers  \cite{yang2010fast}, and split Bregman methods \cite{goldstein2009split}.

Several authors have studied the total variation problem with $\ell^1$ data terms \cite{alliney1992digital,nikolova2002minimizers,chan2005aspects}. 
Approaches based on the $\ell^1$-TV functional enjoy the edge preserving properties of TV regularizers while, in addition, being more robust to outliers. 
Various solution strategies have been proposed for the $\ell^1$-TV problem. To mention some examples, schemes based on smooth approximations are presented in \cite{nikolova2004variational, nikolova2008efficient}; semi-smooth Newton method based approaches are the topic of \cite{clason2010semismooth};
primal-dual methods are proposed in \cite{chambolle2011first,dong2009efficient}. 


\subsection{Algorithms for TV minimization for manifold-valued data}

In this paper we derive algorithms for the TV minimization problem
\eqref{eq:abstract_min_problem_l1_tv_manifold} on manifold-valued data. 
Our algorithms are based on iterative geodesic averaging.
More precisely, we decompose the TV functional in \eqref{eq:abstract_min_problem_l1_tv_manifold} into a 
sum of functionals in such a way that we are able to explicitly compute the proximal mappings of these functionals on the manifold. 
We obtain that these proximal mappings are given in terms of points on certain geodesics. So, in order to make the algorithms work on 
a concrete manifold, the only operations we need are those needed for calculating geodesics. 
The spaces which frequently occur as data spaces are matrix groups or related symmetric spaces. 
So usually, there are explicit formulas available for this task.

Our algorithms are iterative schemes. In each iteration,
we apply the above proximal mappings of the functionals decomposing the TV functional.
The first algorithm is a \emph{cyclic proximal point algorithm}. This means that we successively apply 
the proximal mappings using the output related to the $i$th summand as a new input for the proximal mapping
of the $(i$+$1)$th summand. The second algorithm is a \emph{parallel proximal point algorithm}. Here 
the proximal mappings are calculated for the same initial point 
and averaged afterwards.
Since computing mean values on a manifold is a relatively expensive iterative procedure, 
we also consider a variant which only does approximative averaging (but yields comparable results).
We call this variant \emph{fast parallel proximal point algorithm}.
Due to the averaging procedure, the parallel algorithms need more operations in total. 
However, they have higher potential for parallelization.  

Our algorithms belong to the class of proximal splitting methods (for manifold valued data).
A survey on proximal splitting methods (for scalar data) 
is \cite{combettes2011proximal}. In section 7, the paper \cite{combettes2011proximal} also describes a related parallel algorithm.   
Parallel proximal point algorithms were also considered in \cite{boyd2011distributed}.
Cyclical proximal point algorithms have been studied in \cite{Bertsekas2011in} (for linear spaces)
and in \cite{bavcak2013computing} where they are applied for the computation of means and medians in Hadamard spaces.

Principally, our algorithms work for all $\ell^p$ data terms with $p \geq 1$ as well as for regularizing terms based on $q$th variation, $q \geq 1$, instead of total variation.
This in particular includes the classical Tikhonov regularization which corresponds to $p = q = 2.$
For $p,q = 1,2$ we give nice closed form expressions; in the other cases, one needs the numerical solution of a certain nonlinear equation. 
We furthermore consider Huber data terms as well as Huber regularizing terms (which are sometimes called ``Huber-ROF'').
The latter are employed to avoid unwanted staircasing effects; see \cite{chambolle2011first}.
 
For the class of Cartan-Hadamard manifolds, we obtain the convergence of our geodesic averaging based schemes towards a (global) minimizer of \eqref{eq:abstract_min_problem_l1_tv_manifold}.
Cartan-Hadamard manifolds are Riemannian manifolds containing many symmetric spaces such as the data space in diffusion tensor imaging.
Our convergence statements also hold true for the more general class of Hadamard spaces and for regularization based on $q$th variation as well as Huber data and regularization terms.

\subsection{Applications}

We demonstrate the denoising capabilities of our algorithms on various data spaces. 
It is the common observation of all experiments that
TV minimization reliably removes noise from manifold-valued data while preserving edges. 

First, we consider diffusion tensor images. 
Diffusion tensor imaging (DTI) is a technique to quantify non-invasively the diffusional characteristics of a specimen \cite{basser1994mr, johansen2009diffusion}.
Here the underlying data space is the space of positive matrices. According to the model, the diffusivity in direction $v$ is determined by $v^TAv$ 
where $A$ is a positive matrix representing data. 
The space of positive matrices becomes a Cartan-Hadamard manifold when equipped with a suitable Riemannian metric. This means that our algorithm provably converge 
to a minimizer in the DTI setup. We demonstrate the denoising performance
 with a real diffusion tensor image of a human brain and with synthetic data; see the Figures~\ref{fig:dti_camino} and \ref{fig:dti_synthetic}. 

Next, we consider interferometric synthetic aperture radar (InSAR) data.
InSAR is an important airborne imaging modality for geodesy \cite{massonnet1998radar}. 
In our concrete example, 
the InSAR image has the interpretation of a wrapped periodic version of a digital elevation model \cite{rocca1997overview}.
Hence the underlying data space is the sphere $\mathrm{S}^1.$
From the experiment in Figure~\ref{fig:SAR} we see that total variation minimization is capable of removing almost all the noise from the InSAR image.
We further observe that  $\ell^1$ and Huber data terms are slightly more robust to
outliers in the data than the $\ell^2$ data term.

Our third application is image denoising in nonlinear color spaces.
We consider the LCh space which consists of real-valued luminance and chromaticity components L and C as well as a $S^1$ valued hue component h.
Thus the underlying manifold is the cylinder $\mathbb{R}^2 \times S^1.$
We note that, although the underlying manifold $\R^2 \times \mathrm{S}^1$ is a product space, the algorithms cannot be applied separately to the components.
In our experiment we obtain a better reconstruction quality by 
manifold-valued TV minimization in the LCh color space than by classical vectorial TV minimization in the standard RGB space; see Figure~\ref{fig:LCh}.

 We continue with the sphere $\mathrm{S}^2.$ Data with values in $\mathrm{S}^2$ appear in, for example, chromaticity-based image processing
\cite{chan2001total} and as orientation fields of three dimensional images \cite{rezakhaniha2012experimental}.
We here consider a synthetic example on which we impose von Mises-Fisher noise.
Figure~\ref{fig:s2_exp} shows that, also for data with values in the sphere $\mathrm{S}^2,$ the noise is almost perfectly removed and  that the edges are not smoothed out.

We conclude with the rotation group $\mathrm{SO}(3)$ as data spaces. 
Data with values in $\mathrm{SO}(3)$ appear, for example, in the context of 
aircraft orientations \cite{rahman2005multiscale}, protein alignments \cite{green2006bayesian}, and the tracking of 3D rotational data arising in robotics \cite{drummond2002real}.
We apply our methods to a synthetic time series on which we imposed noise based on a matrix Fisher distribution. 
The results confirm their denoising capability and they also reveal that a Huber regularizing term
is less affected by staircasing effects; cf. Figure~\ref{fig:so3}.

\subsection{Organization of the article}
We start out by developing algorithms for TV minimization for manifold valued data in Section~\ref{sec:Algos}. Then we show the convergence
of our algorithms towards (global) minimizers of the TV minimization problem \eqref{eq:abstract_min_problem_l1_tv_manifold} in Hadamard spaces in Section~\ref{sec:CHproofsSec}.
In Section~\ref{sec:Applications}, we apply our algorithms to denoising data on concrete manifolds.


\section{Algorithms for TV minimization for manifold-valued data}
\label{sec:Algos}

In the following we propose two algorithms for total variation minimization for data which take their values in a manifold.
We consider $\ell^1$ and $\ell^2$ as well as Huber data terms. 
Our algorithms are based on iterative geodesic averaging. 
The appearing geodesic averages are the minimizers of certain proximal mappings which arise as follows: 
we split the TV functional into basic building blocks and consider the 
proximal mappings of these building blocks. 
The first algorithm performs the iteration of the proximal mappings in a cyclical way whereas
the second does so in a parallel way.

\subsection{Splitting of the TV functional and proximal mappings}

Let us consider the problem of (bivariate) $\ell^p$-TV$^q$ minimization
\begin{equation} \label{eq:minimization_problem}
  \frac1p \sum_{i,j} d^p(x_{ij},f_{ij}) + \alpha \frac1q \sum_{i,j} d^q(x_{ij},x_{i+1,j})  
+ \alpha \frac1q \sum_{i,j} d^q(x_{ij},x_{i,j+1})  
  \to   {\min}. 
\end{equation}
The data $f_{ij}$ as well as the arguments $x_{ij}$ to minimize take their values in 
a Riemannian manifold $M$. The symbol $d^p$ denotes the $p$th power of the distance induced by the Riemannian metric.
For a bounded (complete) Riemannian manifold, the functional obviously has a minimizer since continuous functions have minima on compact sets.
For the unbounded case, we notice that going too far away from the data $f$ leads to high functional values. Hence the set of minimizer
candidates is actually confined to a bounded set which brings us back to the already discussed situation and minimizers exist.

Setting $q=1$ in  \eqref{eq:minimization_problem}, we get the discrete (anisotropic) TV functional with $\ell^p$ data term. In particular, if $p=1,$ we are in the $\ell^1$-TV setting.
The case $q=2$ corresponds to the classical Tikhonov regularization term in the scalar case. 
We comment on Huber data and regularizing terms in Section~\ref{sec:Huber}.

Our approaches towards the minimization of \eqref{eq:minimization_problem} are based on 
rewriting \eqref{eq:minimization_problem} as a sum of simpler functions. We consider
the function $F: M \times \ldots \times M \to \mathbb{R}$ which is the data term given by 
\begin{equation} \label{def:F}
   F(x) = \frac{1}{p} \sum_{i,j=1}^{n,m} d^p(x_{ij},f_{ij}),
\end{equation}
as well as, for  $i,j,$ the functions $G_{ij},H_{ij}: M \times \hdots \times M \to \mathbb{R}$ given by
\begin{equation}\label{def:G}
\begin{split}
   G_{ij}(x) &=  \frac{1}{q} d^q(x_{ij},x_{i,j+1}), \\
   H_{ij}(x) &=  \frac{1}{q} d^q(x_{ij},x_{i+1,j}). 
   \end{split}
   \end{equation}
Using this notation, the minimization problem \eqref{eq:minimization_problem} has the form
\begin{equation}
\label{eq:minimization_problem2}
 F(x) + \alpha \sum_{i,j} G_{ij}(x) + \alpha \sum_{i,j} H_{ij}(x) \to {\min}.
\end{equation}
For each summand in \eqref{eq:minimization_problem2}, we consider its proximal mapping \cite{moreau1962fonctions, ferreira2002proximal, azagra2005proximal}. 
The proximal mappings of the $G_{ij}$ are defined by the minimization problem
\begin{equation}\label{eq:DefProxy}
\prox_{\lambda G_{ij}} x = \argmin_{y \in M^{n \times m}} \left( \lambda G_{ij}(y) +  \frac12  d^2(x,y) \right),
\end{equation}
where the parameter $\lambda>0$ and the distance $d$ on the product manifold $M^{n \times m}$ is given by
$
    d^2(x,y) = \sum_{i,j=1}^{n,m} d(x_{ij},y_{ij})^2.
$
The proximal mappings of $F$ and the $H_{ij}$ are defined analogously. The crucial point is that, using the splitting \eqref{eq:minimization_problem2},
the proximal mappings of all appearing summands can be explicitly computed as geodesic averages. More precisely, solving the minimization problem
of \eqref{eq:DefProxy} reduces to computing points on shortest geodesics joining given points. The same is true for the analogous problems for $F$ and the $H_{ij}.$

We give a heuristic introductory derivation of these facts now.
A mathematically precise statement is formulated as Proposition~\ref{prop:RiemDesProx} later on.  
We assume for the time being (without mentioning) that the points in the Riemannian
manifold are sufficiently near to each other such that the following arguments apply. 
Let us consider the proximal mapping of $G_{ij}$ given by \eqref{eq:DefProxy}. 
A necessary condition for $x' \in M^{n \times m}$ to be a minimizer in \eqref{eq:DefProxy} is that $0$ is in the (sub)gradient of $\lambda G_{ij}(x') + \tfrac{1}{2}d^2(x,x').$ 
This immediately implies that, for the $(k,l)^{th}$ component of the proximal mapping $\prox_{\lambda G_{ij}} x,$ we have 
\begin{equation} \label{eq:ProxyConst}
   (\prox_{\lambda G_{ij} x})_{kl} =  x_{kl}   \quad \text{ for } \quad k \neq i,i+1 \text{ and } l \neq j.
\end{equation}
For $k=i,i+1$ and $l=j$ we use that the gradient of the mapping $M \to \R,$
$ z  \mapsto  d^p(z,v)/p$ fulfills (see, e.g., \cite[Eq. (2.8)]{tron2012riemannian})
\begin{equation*} 
\nabla_z d^p(z,v)/p = \frac{\exp^{-1}_z (v)}{d^{2-p}(z,v)}.
\end{equation*}
Hence, we get as necessary conditions for a minimizer $x'$ in \eqref{eq:DefProxy} that 
\begin{align*}
\lambda \tfrac{1} {d^{2-p}(x'_{ij},x'_{i,j+1})}  \exp^{-1}_{x'_{ij}} (x'_{i,j+1})  &+  \exp^{-1}_{x'_{ij}}x_{ij} = 0, \notag \\
\lambda \tfrac{1}{d^{2-p}(x'_{ij},x'_{i,j+1})} \exp^{-1}_{x'_{i,j+1}} (x'_{ij})  &+  \exp^{-1}_{x'_{i,j+1}}x_{i,j+1} = 0.
\end{align*} 
Here $\exp_z^{-1}$ denotes the inverse of the Riemannian exponential mapping at the point $z.$ Thus, both summands in the first
condition are tangent vectors at $x'_{ij}.$ They point in opposite directions, and so, 
the first condition implies that the three points $x'_{i,j},$ $x'_{i,j+1},$ and $x_{i,j}$ lie on a common geodesic in $M.$
Analogously, the second condition implies that the three points $x'_{i,j},$ $x'_{i,j+1},$ and $x_{i,j+1}$ also lie on a common geodesic.
Hence, the four points must lie on one geodesic. In particular, the points $x'_{i,j},$ $x'_{i,j+1}$ -- 
which are the $(i,j)^{th}$ and $(i,j+1)^{th}$ component of the proximal mapping of $G_{ij}$ applied to $x$ -- lie on the geodesic joining 
$x_{i,j}$ and $x_{i,j+1}.$

Then, after some technical considerations (cf.~the proof of Theorem~\ref{thm:ConvergenceAlgA}), 
the location of the points $x'_{i,j} = (\prox_{\lambda G_{ij}} x)_{i,j}$ and $x'_{i,j+1}=(\prox_{\lambda G_{ij}} x)_{i,j+1}$  
are explicitly given as follows. We have  
\begin{equation}\label{eq:ProxOfGij2}
\begin{split}
   (\prox_{\lambda G_{ij}} x)_{ij} &=  [x_{ij},x_{i,j+1}]_t,  \\
   (\prox_{\lambda G_{ij}} x)_{i,j+1} &=  [x_{i,j+1},x_{ij}]_t,
   \end{split}
\end{equation}
where the symbol $[\cdot,\cdot]_t$ denotes the point reached after time $t$ on the unit speed geodesic starting at the first argument in direction of the second argument. 
In the TV case $(q=1)$,
\begin{equation}  \label{eq:ProxAreGeodesic}
  t = \begin{cases}   \lambda,                \qquad &\mbox{if }  \lambda < \tfrac{1}{2} d(x_{ij},x_{i,j+1}),  
                   \\ d(x_{ij},x_{i,j+1})/2,  \qquad       &\mbox{else.}  
      \end{cases}             
\end{equation}
For $q=2,$ which corresponds to quadratic variation, we get
\begin{equation}  \label{eq:proxGell2}
t =   \frac{\lambda}{1+ 2 \lambda} d(x_{ij},x_{i,j+1}).
\end{equation} 

The proximal mappings of the $H_{ij}$ are obtained in a completely analogous way. It remains to find the proximal mapping of $F$ which means finding the proximal mapping of the distance function
in $M$. This is well known and can be found, e.g., in \cite{ferreira2002proximal}. They can again be written as geodesic averages and are explicitly given by  
\begin{align} \label{eq:ProxOfF}
(\prox_{\lambda F})_{ij}(x) =  [x_{ij},f_{ij}]_t,    
\end{align}
where, for the $\ell^2$ data term,  
\begin{align} \label{eq:ProxOfF1}
     t= \tfrac{\lambda}{1+\lambda} d(x_{ij},f_{ij}).  
\end{align}
For the $\ell^1$ data term,
\begin{align} \label{eq:ProxOfF2}
  t= \begin{cases}
    \lambda,   &\mbox{ if } \lambda < d(x_{ij},f_{ij}) ,\\
     d(x_{ij},f_{ij}),            &\mbox{ else.}
    \end{cases} 
\end{align}
This corresponds to the equivalent of {\em soft thresholding} in the context of mani\-folds.

A mathematically precise formulation of the results of the above heuristic derivation is as follows; its proof is provided in Section~\ref{sec:CHproofsSec} below the proof of Theorem~\ref{thm:ConvergenceAlgA}.
\begin{proposition}
\label{prop:RiemDesProx}
Let $M$ be a complete connected Riemannian manifold. Then a solution $y^\ast$ of the proximal mapping related minimization problem
in \eqref{eq:DefProxy} is given as follows.
For $ \quad k \neq i,i+1 \text{ and } l \neq j,$ define $y^\ast_{k,l} =  x_{k,l};$
the component $y^\ast_{i,j}$ is chosen as the point on a shortest geodesic between $x_{ij}$ and $x_{i,j+1}$ starting from $x_{ij}$ reached after time $t$ with $t$ given by \eqref{eq:ProxAreGeodesic},\eqref{eq:proxGell2};  the component $y^\ast_{i,j}$ is obtained analogously, but starting at $x_{i,j+1}$ on the same geodesic with reverse direction.
Analogous statements hold for the $H_{ij}$ and $F.$ 

In particular, if there is only one shortest geodesic (which is always the case for nearby points),
then the proximal mappings of $F,G_{ij}, H_{ij}$ are well defined and they are given by 
\eqref{eq:ProxOfF},\eqref{eq:ProxyConst},\eqref{eq:ProxOfGij2} and their analogues for $H_{ij}.$  
\end{proposition}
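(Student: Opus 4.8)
The plan is to verify directly that the proposed point $y^\ast$ solves the minimization problem in \eqref{eq:DefProxy}, rather than merely satisfying the first-order necessary conditions used in the heuristic derivation. Fix the indices $i,j$ and recall that $G_{ij}$ depends only on the two coordinates $x_{ij}$ and $x_{i,j+1}$. First I would observe that the objective in \eqref{eq:DefProxy} decouples: writing $y=(y_{kl})$, the term $\tfrac12 d^2(x,y) = \tfrac12\sum_{k,l} d(x_{kl},y_{kl})^2$ is a sum over coordinates, while $\lambda G_{ij}(y)$ involves only $y_{ij}$ and $y_{i,j+1}$. Hence for every coordinate $(k,l)$ with $(k,l)\neq(i,j)$ and $(k,l)\neq(i,j+1)$ the only relevant term is $\tfrac12 d(x_{kl},y_{kl})^2$, which is minimized (uniquely) by $y^\ast_{kl}=x_{kl}$; this justifies \eqref{eq:ProxyConst}. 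It remains to minimize the two-variable function
\begin{equation*}
\phi(u,v) = \tfrac{\lambda}{q}\, d(u,v)^q + \tfrac12 d(x_{ij},u)^2 + \tfrac12 d(x_{i,j+1},v)^2
\end{equation*}
over $u,v\in M$.

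Next I would reduce this two-variable problem to a one-variable problem along a geodesic. Set $a=x_{ij}$, $b=x_{i,j+1}$ and $L=d(a,b)$. Given any candidate $(u,v)$, replacing $u$ by its nearest point on a fixed shortest geodesic $\gamma$ from $a$ to $b$ and similarly for $v$ can only decrease each of the three terms (the coupling term $d(u,v)$ is controlled because projecting onto a geodesic in a space with the relevant convexity properties is $1$-Lipschitz; on a general complete manifold one argues locally, which is exactly the ``sufficiently nearby'' hypothesis). So a minimizer may be sought with $u=\gamma(s)$, $v=\gamma(L-r)$ for parameters $s,r\in[0,L]$ measuring arclength from $a$ and from $b$ respectively, and one checks that at the optimum $u,v$ lie between $a$ and $b$ with $d(u,v)=L-s-r$. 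The problem becomes
\begin{equation*}
\min_{s,r\ge 0,\ s+r\le L}\ \tfrac{\lambda}{q}(L-s-r)^q + \tfrac12 s^2 + \tfrac12 r^2,
\end{equation*}
a smooth convex optimization over an interval. By symmetry $s=r=t$, and one solves the scalar problem $\min_{t\in[0,L/2]} \tfrac{\lambda}{q}(L-2t)^q + t^2$. For $q=1$ this is a piecewise-linear-plus-quadratic function whose minimum is at $t=\lambda$ when $\lambda<L/2$ and at $t=L/2$ otherwise, giving \eqref{eq:ProxAreGeodesic}; for $q=2$ differentiation gives $-2\lambda(L-2t)+2t=0$, i.e.\ $t=\tfrac{\lambda}{1+2\lambda}L$, which is \eqref{eq:proxGell2}. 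The points $\gamma(t)$ and $\gamma(L-t)$ are exactly $[x_{ij},x_{i,j+1}]_t$ and $[x_{i,j+1},x_{ij}]_t$, establishing \eqref{eq:ProxOfGij2}. The computation for $H_{ij}$ is verbatim with the pair $(x_{ij},x_{i+1,j})$, and for $F$ one minimizes $\tfrac{\lambda}{p}d(x_{ij},f_{ij})^q$ — wait, rather $\tfrac{\lambda}{p}d(u,f_{ij})^p + \tfrac12 d(x_{ij},u)^2$ over $u$ on the geodesic from $x_{ij}$ to $f_{ij}$, yielding the scalar problem $\min_t \tfrac{\lambda}{p}(D-t)^p + \tfrac12 t^2$ with $D=d(x_{ij},f_{ij})$, whose solutions are \eqref{eq:ProxOfF1} for $p=2$ and \eqref{eq:ProxOfF2} for $p=1$.

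Finally, for the uniqueness statement I would invoke the standard fact (see \cite{ferreira2002proximal,azagra2005proximal}) that when $x_{ij}$ and $x_{i,j+1}$ are close enough to lie in a common totally normal neighborhood, the squared-distance functions are strictly convex along geodesics, so the objective of \eqref{eq:DefProxy} is strictly convex and the minimizer is unique; in that regime the unique shortest geodesic makes the formulas \eqref{eq:ProxOfF}, \eqref{eq:ProxyConst}, \eqref{eq:ProxOfGij2} unambiguous. The main obstacle is the geodesic-projection reduction step on a general complete manifold: globally, nearest-point projection onto a geodesic need not be well defined or nonexpansive, so this step genuinely needs the local hypothesis (or the Hadamard assumption invoked elsewhere in the paper), and care is required to ensure the reduced minimizer is not just a critical point but the global one — which is why the scalar reformulation, being convex in the parameters, is the clean way to close the argument. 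I would also note that on a general manifold there may be several shortest geodesics between $a$ and $b$, which is precisely why the proposition only asserts that a minimizer "is given" by the construction rather than claiming it is the only one, unless the nearby-points hypothesis is added.
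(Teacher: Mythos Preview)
Your decoupling argument for the inactive coordinates and the scalar computation on the geodesic are correct and agree with the paper's. The discrepancy is in the reduction step that places a minimizer on a shortest geodesic between $x_{ij}$ and $x_{i,j+1}$. You do this via nearest-point projection onto the geodesic, relying on the projection being well defined and $1$-Lipschitz so that each of the three terms decreases. As you yourself flag, this holds in Hadamard spaces (where the geodesic is a closed convex set and metric projection is nonexpansive) but not on a general complete Riemannian manifold, and the first part of the proposition carries no locality hypothesis. So your argument, as written, does not cover the statement in the generality claimed.

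The paper closes this gap with a device that uses only the triangle inequality and hence needs no curvature or locality assumption. One starts from an arbitrary minimizer $(y^\ast_{ij},y^\ast_{i,j+1})$, sets $t_1=d(x_{ij},y^\ast_{ij})$ and $t_2=d(x_{i,j+1},y^\ast_{i,j+1})$, and places competitors $z=[x_{ij},x_{i,j+1}]_{t_1}$, $z'=[x_{i,j+1},x_{ij}]_{t_2}$ on a fixed shortest geodesic. The two anchor terms $d(x_{ij},\cdot)^2$ and $d(x_{i,j+1},\cdot)^2$ are then equal by construction, while for the coupling term one has $d(x_{ij},z)+d(z,z')+d(z',x_{i,j+1})=d(x_{ij},x_{i,j+1})\le d(x_{ij},y^\ast_{ij})+d(y^\ast_{ij},y^\ast_{i,j+1})+d(y^\ast_{i,j+1},x_{i,j+1})$, so $d(z,z')\le d(y^\ast_{ij},y^\ast_{i,j+1})$. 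Thus $(z,z')$ has functional value no larger than the minimizer's, hence is itself a minimizer, and your scalar analysis then applies verbatim. Replacing your projection step by this triangle-inequality competitor is exactly what is needed to prove the proposition on an arbitrary complete connected Riemannian manifold.
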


\subsection{A cyclic proximal point algorithm for TV minimization for mani\-fold-valued data}

The first algorithm we propose for TV minimization for manifold-valued data is a cyclic proximal point algorithm based on geodesic averaging.
For vector space data, cyclical proximal point algorithms were considered in \cite{Bertsekas2011in}. For Hadamard spaces, they were 
investigated by M. Ba{\v{c}}{\'a}k \cite{bavcak2013computing} who applied them to the computation of means and medians. 

We now derive a cyclic proximal point algorithm for the minimization of the $\ell^p$-$\mathrm{TV}^q$ functional \eqref{eq:minimization_problem}.
We consider the problem in the form $F(x) +$ $\alpha \sum_{i,j} G_{ij}(x)$ $+ \alpha \sum_{i,j} H_{ij}(x)$ given by \eqref{eq:minimization_problem2}.
We first apply the proximal mapping of $F$ which is given as pointwise geodesic averages of data $f_{ij}$ and the argument of the functional $x_{ij};$ see \eqref{eq:ProxOfF}. 
Then we successively apply the proximal mappings of all the $G_{ij}.$ They are given by \eqref{eq:ProxyConst} and \eqref{eq:ProxOfGij2}
which is again based on geodesic averaging.
As a last step, the analogous operations are executed for the $H_{ij.}$ 

Iteration of all these steps yields the algorithm which is stated as Algorithm~\ref{alg:cyclic_splitting}. 
During the iteration, the parameter $\lambda_n$ of the proximal mappings is successively decreased. 
In this way, the penalty for deviation from the previous iterate is successively increased.
It is chosen in a way such that 
the sequence $\lambda_n$ is square-summable but not summable.  
This is moderate enough not to prevent convergence towards a minimizer; cf. Theorem~\ref{thm:ConvergenceAlgA}.

\begin{algorithm}[ht]
\small
	 \KwIn{Manifold-valued image $f \in M^{n \times m}$, regularization parameter $\alpha>0$, parameter sequence for the proximal mappings $\lambda = (\lambda_1, \ldots)$ $\in \ell^2 \setminus \ell^1$.} 
	 \KwOut{ Minimizer  $x$ of the $\ell^p$-TV$^q$ problem \eqref{eq:minimization_problem}. }
	 \Begin{
	 $x \leftarrow  f$\;
\For{$r\leftarrow 1,2,...$}{
  \For {i $\leftarrow$ 1,...,n; j $\leftarrow$ 1,...,m}{                                	   
	       $t \leftarrow \mathrm{calc\_t}_{F}(\lambda_r,p,q, x_{ij},f_{ij})$      \tcc*{{\scriptsize Calculate $t,$ see Table~\ref{tab:waylength_data}.}}
	       $x_{ij} \leftarrow [x_{ij},f_{ij}]_{t} $   	          \tcc*{{\scriptsize Proximal mapping of $F.$}}
  }
  
	\For{i $\leftarrow$ 1,...,n; j $\leftarrow$ 1,...,m-1}{
	       $t \leftarrow \mathrm{calc\_t}_{GH}(\lambda_r\alpha, p,q, x_{ij},x_{i,j+1})$        \tcc*{{\scriptsize Calculate $t,$ see Table~\ref{tab:waylength_reg}.}}                        	   
	       $x'_{ij} \leftarrow [x_{ij},x_{i,j+1}]_{t}$     \tcc*{{\scriptsize Proximal mapping of $G_{ij}.$}}
	       $x'_{i,j+1} \leftarrow [x_{i,j+1},x_{ij}]_{t};$ \newline  
         $x_{ij} \leftarrow x'_{ij};$ $x_{i,j+1} \leftarrow x'_{i,j+1};$                                    	   
	}
	
	\For{i $\leftarrow$ 1,...,n-1; j $\leftarrow$ 1,...,m}{                   
	       $t \leftarrow \mathrm{calc\_t}_{GH}(\lambda_r\alpha,p,q, x_{ij},x_{i,j+1})$          \tcc*{{\scriptsize Calculate $t,$ see Table~\ref{tab:waylength_reg}.}}     	   
	       $x'_{ij} \leftarrow [x_{ij},x_{i+1,j}]_{t}$\tcc*{{\scriptsize Proximal mapping of $H_{ij}.$}}
	       $x'_{i+1,j} \leftarrow [x_{i+1,j},x_{ij}]_{t}; $    \newline
	       $x_{ij} \leftarrow x'_{ij};$ $x_{i+1,j} \leftarrow x'_{i+1,j};$                                     	   
	} 
}
	}
	\caption{Cyclic proximal point algorithm for $\ell^p$-TV$^q$ for manifold data.}
	\label{alg:cyclic_splitting}
 \end{algorithm}

\subsection{A parallel proximal point algorithm for TV minimization for mani\-fold-valued data}
\label{sec:ParallelProxPoint}
Parallel proximal point algorithms were, for example, 
considered in \cite{boyd2011distributed,combettes2011proximal,setzer2012vector}. 
We here apply a related parallel proximal point algorithm to TV minimization for manifold-valued data.  
A great advantage of this approach is its immediate parallelizability.

As for the cyclic algorithm, we split the TV functional into a sum of simpler functionals.
But instead of applying the proximal mappings successively ,
we apply all the proximal mappings to the same initial data. Then the results of the
different proximal mappings are averaged. 

In order to split the TV functional, we consider the mappings
\begin{align}
  G_e &=  \sum_{j:j \text{ even}} \sum_i  \frac{1}{q} d^q(x_{ij},x_{i,j+1}),  \notag \\
  G_o &=  \sum_{j:j \text{ odd}} \sum_i  \frac{1}{q} d^q(x_{ij},x_{i,j+1}). \label{eq:DefGeGo}
\end{align}
The mapping $H_e,H_o$ are defined analogously, exchanging the role of $i$ and $j$.
Then we have that the $\ell^p$-$\mathrm{TV}^q$ functional can be decomposed into $F + G_e + G_o$ $+ H_e + H_o.$
Since $G_e = \sum_{j:j \text{ even} }$ $\sum_i G_{ij}$, the proximal mapping of $G_e$ is explicitly given by
\begin{equation} \label{eq:ProxGeGo}
(\prox_{\lambda G_e} x)_{i,j} = \begin{cases}
                                     [x_{ij},x_{i,j+1}]_{t_1},      \quad &j \text{ even,}     \\
                                       [x_{ij},x_{i,j-1}]_{t_2},    \quad &j \text{ odd. }
                                \end{cases}       
\end{equation}
Here $t_1$ and $t_2$ are defined by \eqref{eq:ProxAreGeodesic},\eqref{eq:proxGell2} (cf.\ the derivation of \eqref{eq:ProxOfGij2}). 
The proximal mapping of $G_o$ is obtained by exchanging the terms ``even'' and ``odd'' in the above formula.
For $H_e,H_o,$ one exchanges the roles of $i$ and $j.$ 

\newcommand{\mean}{{\rm mean}}
Equipped with these explicit formulas for the proximal mappings, the next step is to average the results of the application of the proximal mappings.
Since our data live in a Riemannian manifold, the usual arithmetic mean in a vector space is not available. However, it is well known 
(cf. \cite{karcher1977riemannian,kendall1990probability, pennec2006riemannian, fletcher2007riemannian})
that 
\begin{equation} \label{eq:DefMean}
   z^\ast = \argmin_{z \in M} \sum_{i=1}^N d(z,z_i)^2
\end{equation}
is the appropriate definition of the mean $z^\ast= \mean(z_1,\ldots,z_N)$  of the $N$ elements $z_i$ on the manifold $M$. 
The mean is in general not globally defined since the minimization problem has no unique solution in general.
For the $z_i$  being in a small ball, however, it is unique. The size of the ball depends on the sectional curvature of the manifold $M.$  
Details and further information can, e.g., be found in \cite{kendall1990probability, karcher1977riemannian}.  

In order to get the mean in the product manifold $M^{n \times m}$ we just have to compute the component-wise means. Applied to the above proximal mapping,
we get one iteration of the parallel algorithm at pixel $i,j$ by
\begin{align}
       &x^{(k+1)}_{ij} =    \\
        \mean &([x^{(k)}_{i,j},x^{(k)}_{i,j+1}]_{t_1},              
                               [x^{(k)}_{i,j},x^{(k)}_{i,j-1}]_{t_2},
                               [x^{(k)}_{i,j},x^{(k)}_{i+1,j}]_{t_3},
                               [x^{(k)}_{i,j},x^{(k)}_{i-1,j}]_{t_4},
                               [x^{(k)}_{i,j},f_{i,j}]_{t_5} )   \notag
\end{align}
where the $t_i$ are computed according to \eqref{eq:ProxOfF1}, \eqref{eq:ProxOfF2} and \eqref{eq:ProxAreGeodesic}, \eqref{eq:proxGell2}, respectively. 
So the iterate at pixel $(i,j)$ is obtained by the mean of geodesic averages of the old iterate in a neighborhood.
The whole algorithm is summed up as Algorithm~\ref{alg:parallel_splitting}.

\newcommand{\approxmean}{{\rm approx\_mean}}

\begin{algorithm}[ht]
\small
	 \KwIn{ Manifold-valued image $f \in M^{n \times m}$, regularization parameter $\alpha>0$, parameter sequence for the proximal mappings $\lambda = (\lambda_1, \ldots)$ $\in \ell^2 \setminus \ell^1$.} 
	 \KwOut{ Minimizer  $x$ of the $\ell^p$-TV$^q$ problem \eqref{eq:minimization_problem}. }
	 \Begin{
	 $x \leftarrow  f$\;
\For {$r \leftarrow 1,2,...$}{
  \For {$i\leftarrow 1,...,n; j \leftarrow 1,...,m$}{       
	       $t \leftarrow \mathrm{calc\_t}_{F}(\lambda_r, p,q, x_{i,j},f_{i,j})$          \tcc*{{\scriptsize Calculate $t,$ see Table~\ref{tab:waylength_data}.}}
	       $z^{(1)} \leftarrow [x_{ij},f_{ij}]_{t} $   	          \tcc*{{\scriptsize Proximal mapping of $F.$}}
         $t \leftarrow \mathrm{calc\_t}_{GH}(\lambda_r \alpha,p,q, x_{i,j},x_{i,j+1})$ \tcc*{{\scriptsize Calculate $t,$ see Table~\ref{tab:waylength_reg}.}}
         $z^{(2)} \leftarrow [x_{ij},x_{i,j+1}]_{t} $   	          \tcc*{{\scriptsize Proximal mapping of $G_e/G_o.$}}
         $t \leftarrow \mathrm{calc\_t}_{GH}(\lambda_r \alpha,p,q, x_{i,j},x_{i,j-1})$ \tcc*{{\scriptsize Calculate t by  \eqref{eq:ProxAreGeodesic},\eqref{eq:proxGell2}}}
         $z^{(3)} \leftarrow [x_{ij},x_{i,j-1}]_{t} $   	          \tcc*{{\scriptsize Proximal mapping of $G_o/G_e.$}}
         $t \leftarrow \mathrm{calc\_t}_{GH}(\lambda_r \alpha,p,q, x_{i,j},x_{i+1,j})$ \tcc*{{\scriptsize Calculate $t,$ see Table~\ref{tab:waylength_reg}.}}
         $z^{(4)} \leftarrow [x_{ij},x_{i+1,j}]_{t} $   	          \tcc*{{\scriptsize Proximal mapping of $H_o/H_e.$}}
         $t \leftarrow \mathrm{calc\_t}_{GH}(\lambda_r \alpha,p,q, x_{i,j},x_{i-1,j})$ \tcc*{{\scriptsize Calculate $t,$ see Table~\ref{tab:waylength_reg}.}}
         $z^{(5)} \leftarrow [x_{ij},x_{i-1,j}]_{t} $   	          \tcc*{{\scriptsize Proximal mapping of $H_o/H_e.$}}
         $x_{ij} \leftarrow \mean(z^{(1)},z^{(2)},z^{(3)},z^{(4)},z^{(5)})$  \tcc*{{\scriptsize Intrinsic mean.}}
         Alternative:
         $x_{ij} \leftarrow \approxmean(z^{(1)},z^{(2)},z^{(3)},z^{(4)},z^{(5)})$  \tcc*{{\scriptsize  Fast \newline \hfill approximative variant (cf. \eqref{eq:approx_mean_Formula})}.} 
  }
}  
  	}
	\caption{Parallel proximal point algorithm for $\ell^p$-TV$^q$ for manifold data.}
	\label{alg:parallel_splitting}
 \end{algorithm}

In contrast to the euclidean case there is no closed form expression of the intrinsic mean defined by \eqref{eq:DefMean} in Riemannian manifolds.
The methods used for computing the mean are of iterative nature and are thus more time consuming. 
Perhaps the most well known method for computing the intrinsic mean, is the gradient descent already mentioned in Karcher's seminal paper \cite{karcher1977riemannian}; cf. also \cite{fletcher2007riemannian}, for example.
The iteration for computing the intrinsic mean of the points $x_1,\ldots,x_N$ is given by
\begin{equation} \label{eq:GradientDescentIntMean}
     x^{(k+1)} =  \exp_{x^{(k)}} \sum_{i=1}^N \tfrac{1}{N} \exp^{-1}_{x^{(k)}}x_i.
\end{equation}
Also approaches based on Newton's method can be found in the literature; see, e.g., \cite{ferreira2013newton}. 

However, it is reported in the literature and also confirmed by the authors' experience that the gradient descent converges rather fast; 
in most cases, $5$-$10$ iterations are enough for five points.
This might explain why this simple method of gradient descent is widely used.

\subsection{Speed-up of the parallel proximal point algorithm} 
\label{sec:SpeedUp}

In Algorithm~\ref{alg:parallel_splitting}, 
we calculate the intrinsic mean of the five points $z^{(i)}$ in the inner loop. 
Each step of the gradient descent \eqref{eq:GradientDescentIntMean} in the computation of the mean of the $z^{(i)}$
takes about half of the time needed for computing the points $z^{(i)}$ by geodesic averaging.
Since we typically need at least five iterations for the gradient descent, computing the means is the most time consuming part of
Algorithm~\ref{alg:parallel_splitting}.

In order to reduce the computation time, we propose to replace the mean by another construction (known as geodesic analogues in the subdivision context \cite{wallner2005convergence}) which is computationally less demanding. 
This construction approximates the mean,
the results are comparable (cf. Figure~\ref{fig:dti_synthetic}), and we can also show convergence towards a minimizer (cf. Theorem~\ref{eq:ConvergenceTheoremParaGeod}).

In order to explain the construction, we rewrite the euclidean  mean $x$ of $n$ points $x_1, \ldots,x_n$ as iterative convex combinations of only two points (in a binary tree like fashion):
\newcommand{\conv}{{\rm conv}}
\begin{equation}
 \label{eq:GeoAvAbstr}
   x = \sum \tfrac{1}{n} x_i  =     \conv_{t_1}(\conv_{t_2}(\ldots,\ldots \conv_{t_l}(x_{i_l},x_{j_l}) \ldots) ,\conv_{t_3}(\ldots,\ldots)).
\end{equation} 
Here, we use the notation $\conv(y,z)_t$ for the convex combination $(1-t)y+tz$ of points $y,z.$ 
For example, for $n=5$ points, we have the following representation
\begin{equation}\label{eq:Setof54geoAn}
   x= \conv_{0.2}(\conv_{0.5}(\conv_{0.5}(x_1,x_2), \conv_{0.5}(x_3,x_4) ) ,x_5).
\end{equation} 
We note that the above representation is not unique.  
The idea is now to replace each euclidean convex combination $\conv_{t}(x,y)$ in \eqref{eq:GeoAvAbstr} by the corresponding Riemannian one, 
i.e., the point $[x,y]_{td(x,y)}$ on the geodesic joining $x$ and $y.$    Then, \eqref{eq:GeoAvAbstr} reads
\begin{equation}
 \label{eq:GeoAvARiem}
   x =  [[\ldots,\ldots [x_{i_l},x_{j_l}]_{t'_l} \ldots]_{t'_2} ,[\ldots,\ldots]_{t'_3}]_{t'_1},
\end{equation}
where $t'_k = t_k d_k$ and $d_k$ denotes the distance of the elements in the bracket the $d_k$ is attached to. (This technicality arises since we consider unit speed geodesics.) 
We consider the above decomposition \eqref{eq:Setof54geoAn} and transport it to the Riemannian setting \eqref{eq:GeoAvARiem}. 
Then, instead of using the mean in Algorithm~\ref{alg:parallel_splitting} we propose to use the alternative procedure (called ``approx$\_$mean'' 
in Algorithm~\ref{alg:parallel_splitting}) given by
\begin{equation}
 \label{eq:approx_mean_Formula}
    x  =  [[[z^{(1)},z^{(2)}]_{0.5 d_1},[z^{(3)},z^{(4)}]_{0.5 d_2}]_{0.5 d_3},z^{(5)}]_{0.2 d_4}.        
\end{equation}
Here each $d_k$ again denotes the distance of the elements in the bracket the $d_i$ is attached to.  
The points $z^{(i)}$ are the results of the application of the proximal mappings 
in Algorithm~\ref{alg:parallel_splitting}.

The full algorithm is given by Algorithm~\ref{alg:parallel_splitting} using the part referred to as ``Alternative''.

\subsection{Huber regularizing and data terms}  \label{sec:Huber}

\begin{table}
\centering
\begin{tabular}{p{0.15\textwidth}p{0.75\textwidth}}
\toprule
\textbf{Regularizer} 	& {\bf Geodesic path length} (value of  $\mathrm{calc\_t}_{GH}(\lambda, p,q, y,z)$)\\ \toprule
  TV 	& $\min(\lambda, d(y,z)/2)$ \\ \midrule
  TV$^2$ &	 $\frac{\lambda}{1+ 2\lambda} d(y,z)$ \\ \midrule
  Huber 	&  $\begin{cases}
                            \frac{2 \lambda \tau^2}{1+ 4 \lambda \tau^2} d(y,z),    &\text{if }d(y,z) < \frac{\omega(1+ 4 \lambda \tau^2)}{\sqrt{2}\tau} ,\\ 
                            \min \left( d(y,z)/2, \sqrt{2}\lambda \omega \tau \right),     & \text{otherwise.}                             
                        \end{cases}$   \\
\bottomrule
\end{tabular}
\caption{Geodesic path length for proximal point problems
 associated to $G$ and $H$
 for the regularization terms considered in this article.
  }
  \label{tab:waylength_reg}
\end{table}

Total variation regularized images may suffer from the undesirable creation of steps in the solution. This is often called staircasing effect. 
An effective way to decrease staircasing is to replace the total variation by the \emph{Huber regularizer} sometimes called \emph{Huber-ROF model} \cite{chambolle2011first}.
To this end, we replace $d^q$ in the definition of the TV$^q$ functional \eqref{eq:minimization_problem} by $h \circ d$ which is the concatenation of the distance on the manifold and the \emph{Huber function} $h.$ 
The Huber function $h$ is defined, for $s>0$, by
\begin{equation}  \label{eq:huber}
h(s) = \begin{cases}
       \tau^2  s^2,     &\text{for }  s < \omega / (\sqrt{2}\tau) ,\\ 
       \omega \sqrt{2}\tau s - \omega^2/2,   & \text{otherwise,}
       \end{cases}
       \qquad \tau,\omega > 0.
\end{equation}
It is a square function (for small arguments) smoothly glued with an absolute value function (for large arguments).  
In analogy to \eqref{def:G} and \eqref{eq:minimization_problem2}  we write the Huber regularizer as
$\sum_{i,j} G^h_{ij}$ $+ \sum_{i,j} H^h_{ij}$ with 
$G^h_{ij}(x)$ $=  h \circ d(x_{ij},x_{i,j+1})$ and 
$H^h_{ij}(x)$ $=  h \circ d(x_{ij},x_{i+1,j}).$
As in \eqref{eq:ProxyConst} and \eqref{eq:ProxOfGij2}, the proximal mappings of the $G^h_{ij}$ are given by (cf. the proof of Theorem~\ref{thm:ConvergenceAlgA})
\begin{equation}\label{eq:ProxOfGijHuber}
   (\prox_{\lambda G^h_{ij}} x)_{kl} = 
   \begin{cases}
     [x_{ij},x_{i,j+1}]_t, & k = i \text{ and } l = j, \\
 [x_{i,j+1},x_{ij}]_t, &  k = i \text{ and } l = j + 1, \\
     x_{kl},  & \text{otherwise,}
   \end{cases}
\end{equation}
where
\begin{align} \label{eq:ProxOfHuberRegT}
 t= \begin{cases}
                            \frac{2 \lambda \tau^2}{1+ 4 \lambda \tau^2} d(x_{i,j},x_{i,j+1}),    &\text{if }d(x_{ij},x_{i,j+1}) < \frac{\omega(1+ 4 \lambda \tau^2)}{\sqrt{2}\tau},\\ 
                            \min \left( d(x_{ij},x_{i,j+1})/2, \sqrt{2}\lambda \omega \tau \right),     & \text{otherwise.}                             
                        \end{cases}   
\end{align}
So replacing the procedure to calculate the geodesic length $t$ in Algorithm~\ref{alg:cyclic_splitting} and Algorithm~\ref{alg:parallel_splitting}
by the procedure to calculate $t$ given by \eqref{eq:ProxOfHuberRegT} yields cyclic and parallel minimization algorithms with the Huber regularizing term.

We can use the Huber function for the data term, i.e., we let
$ F_h(x,f) = \sum\nolimits_{i,j}h \circ d(x_{ij},f_{ij}).$  For small distances the Huber data term behaves like the $\ell^2$ data term but it is more robust to outliers.
The proximal mapping of $F_h$ is given by (cf. the proof of Theorem~\ref{thm:ConvergenceAlgA})
\begin{align} \label{eq:ProxOfHuber}
&(\prox_{\lambda F_h})_{ij}(x) =  [x_{ij},f_{ij}]_t,  \\ \notag      
&\text{ where } t= \begin{cases}
                            \frac{2 \lambda \tau^2}{1+ 2 \lambda \tau^2} d(x_{ij},f_{ij}),    &\text{if }d(x_{ij},f_{ij}) < \frac{\omega(1+ 2 \lambda \tau^2)}{\sqrt{2}\tau} ,\\ 
                            \min \left( d(x_{ij},f_{ij}), \sqrt{2}\lambda \omega \tau \right),    & \text{otherwise.}                             
                        \end{cases}   
\end{align}
Using the proximal mapping of the Huber data term \eqref{eq:ProxOfHuber} instead of the proximal mapping of $F$ in 
Algorithm~\ref{alg:cyclic_splitting} and Algorithm~\ref{alg:parallel_splitting}, 
respectively, yields cyclic and parallel TV minimization algorithms with the Huber data term.

\begin{table}
\centering
\begin{tabular}{p{0.15\textwidth}p{0.75\textwidth}l}
\toprule
\textbf{Data term} 	& {\bf Geodesic path length} (value of ${\rm calc\_t}_{F}(\lambda, p,q, y,z))$  \\ \toprule
  $\ell^1$ 	& $\min(\lambda, d(y,z))$ \\ \midrule
  $\ell^2$	& $\frac{\lambda}{1+ \lambda} d(y,z)$ \\ \midrule
  Huber 	& $\begin{cases}\frac{2 \lambda \tau^2}{1+ 2 \lambda \tau^2} d(y,z),    &\text{if }d(y,z) < \frac{\omega(1+ 2 \lambda \tau^2)}{\sqrt{2}\tau},\\ 
                            \min{\left( d(y,z), \sqrt{2}\lambda \omega \tau \right)},     & \text{otherwise.}                             
                        \end{cases} $ \\
\bottomrule
\end{tabular}
\caption{Geodesic path lengths for the proximal point problem
 associated to $F$ for the data terms considered in this article.
  }
    \label{tab:waylength_data}
\end{table}

\section{Convergence in Hadamard spaces}
\label{sec:CHproofsSec}

In this section we show the convergence of Algorithm~\ref{alg:cyclic_splitting} and Algorithm~\ref{alg:parallel_splitting} for a certain class of spaces
which the TV functionals we consider are convex on.

For general Riemannian manifolds, the $\ell^p$-TV$^q$ functional \eqref{eq:minimization_problem} is not necessarily convex. The perhaps simplest example where convexity fails is
the one-dimensional sphere $S^1$ (cf. \cite{strekalovskiy2011total}). 
In the non-convex case, the study of (global) convergence becomes much more involved and is out of the scope of this paper.   
Here, we treat the quite large class of Cartan-Hadamard manifolds where we still have convexity.
These are complete Riemannian manifolds of nonpositive sectional curvature.  
Prominent examples are the spaces of positive matrices (which are the data space in diffusion tensor imaging) and the hyperbolic spaces.
For details we refer to \cite{doCarmo1992ri} or to \cite{ballmann1985manifolds}.

The proofs in this section work in the more general setup of Hadamard spaces without additional effort. 
Hadamard spaces are certain metric spaces generalizing the
concept of Cartan-Hadamard manifolds which are particular instances. Examples of Hadamard spaces which are not Cartan-Hadamard manifolds are 
the metric trees of \cite{Sturm2003he}. Since there is no additional effort, we consider Hadamard spaces
as  underlying spaces in this section.

A Hadamard space is a geodesic space, i.e., for each two points $x,y$ the length of the shortest arc connecting $x$ and $y$ (which exists by definition) equals the distance of the points.
Furthermore, there is a certain condition ensuring that the geodesic triangles are ``not fat'' (Triangles on the sphere are ``fat''.). This condition
providing the essential properties of Cartan-Hadamard manifolds in this metric space setting is as follows. For given $x_0,x_1$ there is a point $y$ on the geodesic joining them such that
for every $z,$  $d^2(z, y)$ $\leq \tfrac{1}{2} d^2 (z, x_0)$  $+ \tfrac{1}{2} d^2 (z, x_1)$ $- \tfrac{1}{4} d^2 (x_0, x_1)$. 
For details we refer to \cite{Sturm2003he} and the references therein or to the book \cite{bridson1999metric}.

We note that the $\ell^p$-TV$^q$ functionals given by \eqref{eq:minimization_problem} are convex in a Hadamard space. 
This is because the distance function is doubly convex in Hadamard spaces; see \cite{Sturm2003he}.

Our first goal is to show the convergence of Algorithm~\ref{alg:cyclic_splitting} which is the geodesic averaging algorithm based on cyclical application of 
proximal mappings.   

\begin{theorem} \label{thm:ConvergenceAlgA}
For data in a (locally compact) Hadamard space Algorithm~\ref{alg:cyclic_splitting} converges towards a minimizer of the $\ell^p$-TV$^q$ functional.
The statement remains true when using Huber data and regularizing terms based on \eqref{eq:huber}.
\end{theorem}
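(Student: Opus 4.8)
The plan is to exhibit Algorithm~\ref{alg:cyclic_splitting} as a cyclic proximal point algorithm for a finite sum of convex, lower semicontinuous functions on a Hadamard space, and then to run the convergence machinery of Ba\v{c}\'ak \cite{bavcak2013computing} (which extends Bertsekas \cite{Bertsekas2011in} to the metric setting). Write the $\ell^p$-$\mathrm{TV}^q$ functional \eqref{eq:minimization_problem2} as $f=\sum_k f_k$, the finite family consisting of the single-site terms $F_{ij}(x)=\tfrac1p d^p(x_{ij},f_{ij})$ (or their Huber versions $h\circ d(x_{ij},f_{ij})$, with $h$ from \eqref{eq:huber}) and the edge terms $\alpha G_{ij}$, $\alpha H_{ij}$ (or $\alpha G^h_{ij}$, $\alpha H^h_{ij}$). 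Each $f_k$ is convex and lsc on the product Hadamard space $M^{n\times m}$: the distance $d$ of a Hadamard space is jointly convex \cite{Sturm2003he}, and $s\mapsto s^p$, $s\mapsto s^q$ ($p,q\ge1$) and $h$ are convex and nondecreasing on $[0,\infty)$, so $d^p$, $d^q$, $h\circ d$ are convex; lower semicontinuity is clear, and each $f_k$ depends on at most two coordinates. One cycle of Algorithm~\ref{alg:cyclic_splitting} applies the proximal mappings of all these $f_k$ in a fixed order with parameter proportional to $\lambda_r$.

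Second, I would pin down the closed forms of these proximal mappings (this simultaneously proves Proposition~\ref{prop:RiemDesProx} and the Huber formulas \eqref{eq:ProxOfGijHuber}--\eqref{eq:ProxOfHuber}). Take a summand $\mu G_{ij}$ and a point $x$. Since $G_{ij}$ depends only on the coordinates $(i,j)$ and $(i,j{+}1)$, the prox fixes all other coordinates and reduces to the two-point problem $\min_{y_1,y_2}\tfrac{\mu}{q}d(y_1,y_2)^q+\tfrac12 d(x_1,y_1)^2+\tfrac12 d(x_2,y_2)^2$, where $x_1=x_{ij}$, $x_2=x_{i,j+1}$. Let $\gamma$ be a shortest geodesic from $x_1$ to $x_2$ and $\pi$ the nearest-point projection onto $\gamma$; in a Hadamard space $\gamma$ is convex and $\pi$ is single-valued and $1$-Lipschitz, so replacing $(y_1,y_2)$ by $(\pi y_1,\pi y_2)$ decreases none of the three terms and a minimizer lies on $\gamma$. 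Parametrising $\gamma$ by arclength, $y_1=\gamma(s)$, $y_2=\gamma(u)$ with $0\le s\le u\le L:=d(x_1,x_2)$, the objective becomes $\tfrac{\mu}{q}(u-s)^q+\tfrac12 s^2+\tfrac12(L-u)^2$, which is strictly convex and invariant under $(s,u)\mapsto(L-u,L-s)$; hence the minimizer satisfies $s=L-u$ and, writing $r=s\in[0,L/2]$, minimises $\tfrac{\mu}{q}(L-2r)^q+r^2$. A one-variable computation gives $r=\min(\mu,L/2)$ for $q=1$ and $r=\tfrac{\mu}{1+2\mu}L$ for $q=2$; for the Huber regularizer the same computation with the piecewise $h$ yields \eqref{eq:ProxOfHuberRegT}. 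The prox of $H_{ij}$ is identical after swapping coordinates, and the prox of the data term is the same argument along the geodesic from $x_{ij}$ to $f_{ij}$, giving \eqref{eq:ProxOfF1}, \eqref{eq:ProxOfF2}, \eqref{eq:ProxOfHuber}.

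Third, existence of minimizers and --- crucially --- boundedness of the iterates. Fix a base point $p_0$ and put $R=\max_{ij}d(f_{ij},p_0)$. Every update performed by Algorithm~\ref{alg:cyclic_splitting} replaces a coordinate by a point on a geodesic joining two current coordinates (for $G$, $H$) or joining $x_{ij}$ and $f_{ij}$ (for $F$); since $d(\cdot,p_0)$ is convex along geodesics and all entries of Tables~\ref{tab:waylength_reg}--\ref{tab:waylength_data} satisfy $t\le d(\cdot,\cdot)$ (indeed $t\le d/2$ for the regularizers), such a step keeps $\max_{ij}d(x_{ij},p_0)$ below $\max(\max_{ij}d(x_{ij},p_0),R)$. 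As the algorithm starts at $x^{(0)}=f$, the invariant $\max_{ij}d(x_{ij},p_0)\le R$ is preserved, so all iterates lie in the closed geodesically convex set $B=\{x:\ d(x_{ij},p_0)\le R\ \forall i,j\}$, which is compact by local compactness and Hopf--Rinow. The coordinatewise nearest-point projection onto the ball $B(p_0,R)$ fixes $f$ and is $1$-Lipschitz, hence does not increase $f$, so $\inf_{M^{n\times m}}f=\min_B f$ is attained. On the compact set $B$ each $f_k$ is Lipschitz (the quadratic terms on bounded sets, the $\ell^1$ and Huber terms globally) with a common constant $L_B$.

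Finally the convergence argument. Starting from the Hadamard-space proximal inequality (see \cite{ferreira2002proximal, bavcak2013computing})
\[
  d^2(\prox_{\mu g}x,z) + d^2(x,\prox_{\mu g}x) \le d^2(x,z) + 2\mu\bigl(g(z)-g(\prox_{\mu g}x)\bigr),
\]
applied to each summand within one cycle and telescoped, and using $L_B$ to control the differences $g_k(\text{intermediate iterate})-g_k(\text{end-of-cycle iterate})$ by $O(\lambda_r)$, one obtains for every minimizer $z$ an estimate $d^2(x^{(r+1)},z)\le d^2(x^{(r)},z)-2\lambda_r\bigl(f(x^{(r)})-\min f\bigr)+C\lambda_r^2$ with $C=C(L_B,\alpha,n,m)$. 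Summing over $r$ and using $\lambda\in\ell^2\setminus\ell^1$ gives $\sum_r\lambda_r\bigl(f(x^{(r)})-\min f\bigr)<\infty$, hence $\liminf_r f(x^{(r)})=\min f$; the same estimate shows $d^2(x^{(r)},z)$ converges for each minimizer $z$ (asymptotic Fej\'er monotonicity). Picking a subsequence along which $f(x^{(r)})\to\min f$, extracting a convergent sub-subsequence by compactness of $B$, and using lower semicontinuity of $f$, the limit $\bar x$ is a minimizer; then $d^2(x^{(r)},\bar x)$ converges and has a subsequence tending to $0$, so $x^{(r)}\to\bar x$. The Huber data and regularizing terms are covered verbatim, $h\circ d$ being convex, lsc and Lipschitz on $B$. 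I expect the main obstacle to be the bookkeeping that keeps the non-Lipschitz quadratic terms ($p=2$, $q=2$) under control --- the geodesic-convexity observation confining all iterates to $B(p_0,R)$ is the crux --- together with the careful telescoping needed to pass from the per-summand prox inequality to the per-cycle Fej\'er estimate.
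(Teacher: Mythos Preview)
Your proof is correct and follows essentially the same approach as the paper: verify the closed-form proximal mappings by reducing to a one-dimensional problem on the geodesic between the two active points, confine the iterates to a bounded convex set on which all summands are Lipschitz, and then invoke (or, as you do, reproduce) Ba\v{c}\'ak's convergence theorem \cite[Theorem~3.4]{bavcak2013computing} for the cyclic proximal point algorithm. Your reduction to the geodesic via the $1$-Lipschitz nearest-point projection and your explicit ball-confinement $B(p_0,R)$ are slightly tidier than the paper's direct triangle-inequality argument and its appeal to the convex hull of the data, and you spell out the Fej\'er-monotonicity argument rather than citing it, but the substance is identical.
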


\begin{proof}
We first show that, in a Hadamard space, the proximal mappings of the functions $F,G_{ij}$ and $H_{ij}$ 
are given by \eqref{eq:ProxOfF},\eqref{eq:ProxyConst},\eqref{eq:ProxOfGij2} and their analogues for $H_{ij}.$
We also show that the proximal mappings of the Huber regularizing and data terms are given by \eqref{eq:ProxOfGijHuber}, \eqref{eq:ProxOfHuberRegT}
and \eqref{eq:ProxOfHuber}, respectively.
We start with the mappings $G_{ij}.$  
The proximal mapping of $G_{ij}$ is given by 
\begin{equation} \label{eq:defproxproof}
\prox_{\lambda G_{ij}} x = \argmin_y   \lambda \tfrac{1}{q} d(y_{ij},y_{i+1,j})^q + \tfrac{1}{2} \sum_{k,l} d(y_{kl},x_{kl})^2.
\end{equation}
Hence, every minimizer $y^\ast$ must fulfill $y^\ast_{kl} = x_{kl}$ for $k \neq i$ and $l \neq j,j+1.$ 
Otherwise, letting $y^\ast_{kl} = x_{kl}$ for $k \neq i$ and $l \neq j,j+1$ would decrease the functional value which contradicts the minimizer property. 
This shows \eqref{eq:ProxyConst}.

Now let $y^\ast$ be a minimizer of \eqref{eq:defproxproof}.
We show that the four points $x_{ij},x_{i,j+1},y^\ast_{ij},$ $y^\ast_{i,j+1}$ lie on one geodesic. 
We may assume that $d(x_{ij}, y^\ast_{ij})$ $\leq d(x_{ij}, x_{i,j+1})$ since, otherwise, setting $y'_{ij} = y'_{i,j+1} = x_{i,j+1}$
would yield a lower functional value in \eqref{eq:defproxproof}. By the same argument, we may assume
that $d(x_{i,j+1}, y^\ast_{i,j+1}) \leq d(x_{ij}, x_{i,j+1}).$

We define the point $z= [x_{ij},x_{i,j+1}]_{t_1}$ as the point reached on the unit speed geodesic starting at $x_{ij}$ after time $t_1 = d(x_{ij},y_{ij}^\ast).$ 
Analogously, we let $z'= [x_{i,j+1},$ $x_{i,j+1}]_{t_2}$ be the point on the same geodesic when starting from $x_{k+1}$ after time $t = d(x_{k+1},y_{k}^\ast).$  

We first consider the case with ordering $x_{ij}, z, z', x_{i,j+1}$ 
when running on the geodesic starting at $x_k$ (including the case $z=z'$.)
Since these points lie on a geodesic, we have that 
\begin{align} \label{eq:EstimGeodesics}
d(x_{ij},x_{i,j+1}) &= d(x_{ij}, z ) + d(z,z') +d(z',x_{i,j+1})  \\
                    &\leq d(x_{ij},y^\ast_{ij}) + d(y^\ast_{ij},y^\ast_{i,j+1}) +d(y^\ast_{i,j+1},x_{i,j+1}) \notag 
\end{align}
Here, the inequality is true since every geodesic is a shortest path in a Hadamard space.
By our choice of $t_1,t_2,$ this implies 
$ d(z,z') \leq  d(y^\ast_{ij},y^\ast_{i,j+1})$.
As a consequence, the functional value $a(z,z') \leq a(y^\ast_{ij},y^\ast_{i,j+1})$ where  
\begin{equation} \label{eq:reducedMinproblem}
  a(v,v') = \tfrac{1}{2}d(v,x_{ij})^2 + \lambda \tfrac{1}{q} d(v,v')^q + \tfrac{1}{2}d(v',x_{i,j+1})^2.
\end{equation}
This is the essential part of the functional in \eqref{eq:defproxproof} meaning that
minimizing the functional in \eqref{eq:defproxproof} is equivalent to minimizing $a.$
Hence, since geodesics are unique in a Hadamard space, $z= y^\ast_{ij}$ and $z'= y^\ast_{i,j+1}$.
Thus, these four points lie on a geodesic.

Next, we consider the case with ordering $x_{ij},$ $z',$ $z,$ $x_{i,j+1}$ 
when running on the geodesic starting at $x_k.$ Then we have that $a(z,z) \leq a(z,z'),$ since 
$d(z,x_{ij})$ $< d(z,x_{i,j+1}).$ Hence, we obtain a lower functional value which means that
this situation cannot occur for a minimizer of \eqref{eq:defproxproof}.

Summing up, we know that the points $x_{ij}, y^\ast_{ij},y^\ast_{i,j+1}, x_{i,j+1}$ lie on a geodesic in this ordering.

Next, we need the precise position of $y^\ast_{ij},y^\ast_{i,j+1}$ on the geodesic.   
We consider the real numbers $d=d(x_{ij},x_{i,j+1})$ and the time points $t_1$ and $t_2$ given above.
Minimization of $a$ is now equivalent to minimizing, for $0 \leq t_1,t_2 \leq d/2,$ 
$$
  a'(t_1,t_2) = \tfrac{1}{2} t_1^2 + \lambda \tfrac{1}{q}(d-t_1-t_2)^q + \tfrac{1}{2} t_2^2.
$$
By symmetry and uniqueness, a minimizer fulfills $t_1=t_2.$ Hence, we have to find a minimizer of   
$$
  a''(t) = t^2 + \tfrac{\lambda}{q}(d-2t)^q .
$$
For $q=1,$ we get the solution 
$$
    t = \min(\lambda,d/2 ),
$$
and for $q = 2,$
$$
    t = \frac{\lambda d}{2+ 2 \lambda}.
$$
This shows \eqref{eq:ProxOfGij2} and the subsequent formulas \eqref{eq:ProxAreGeodesic} and \eqref{eq:proxGell2}.
The corresponding proof for the $H_{ij}$ is analogous.

Next, we consider the Huber data term $F_h$ based on \eqref{eq:huber}. We show that its proximal mapping is given by 
\eqref{eq:ProxOfHuber}. Similar as above, we define the point $z = [x_{ij},f_{ij}]_{t},$ where $t=d(x_{ij},y^\ast_{ij})$
and $y^\ast_{ij}$ is the $(i,j)^{th}$ component of the proximal mapping of $\lambda F_h$ at $x.$ Modifying the
arguments above, we see that $z = y^{\ast}_{ij}.$ Thus the three points $x_{ij},y^{\ast}_{ij},f_{ij}$ 
lie on one geodesic and it remains to determine $t.$ 
This leads to minimizing the scalar problem $ t \mapsto 2 \lambda h(d-t)$ $+t^2$ under the constraint $0\leq t \leq d,$ where we let $d=d(x_{ij},f_{ij}).$  
To solve this problem one applies an analogous calculation as the one in Example 4.5 of \cite{Chaux2007Var} and concludes \eqref{eq:ProxOfHuber}. 
The corresponding proof of \eqref{eq:ProxOfF} for the $\ell^p$ type data term $F$ is analogous.

It remains to consider the Huber regularizer $G^h$+$H^h$ from Section~\ref{sec:Huber}. 
We proceed analogous to the proof for the regularizer $G$+$H$ to obtain that the four points $x_{ij}, y^\ast_{ij},y^\ast_{i,j+1}, x_{i,j+1}$ lie on a geodesic in this ordering
which shows \eqref{eq:ProxOfGijHuber}.
Then proceeding as in the proof for the Huber data term $F_h$ with a similar calculation as in \cite{Chaux2007Var} we obtain the formula 
\eqref{eq:ProxOfHuberRegT}.

Since Algorithm~\ref{alg:cyclic_splitting} only produces convex combinations of the points involved, we have that the iterates produced
by the algorithm stay in the convex hull of the data $(f_{i,j})_{i,j}.$ Since all functions $F,G_{ij},H_{ij}$ are continuous,
they are Lipschitz on that convex hull. Hence the assumptions of \cite[Theorem 3.4]{bavcak2013computing} are fulfilled, and the application of this theorem
yields the convergence of Algorithm~\ref{alg:cyclic_splitting}.
\end{proof}

Next, using the above techniques, we supply the proof of Proposition~\ref{prop:RiemDesProx} which was a statement on proximal mappings in the setup of Riemannian manifolds (not Hadamard spaces).

{\em Proof of Proposition~\ref{prop:RiemDesProx}.}
We show the statement for the functions $G_{ij}.$ 
As in the proof of Theorem~\ref{thm:ConvergenceAlgA} we see that $y^\ast_{k,l} =  x_{k,l}$ for $k \neq i,i+1$  and $l \neq j.$
Then we consider the components $y^\ast_{ij}$ and $y^\ast_{i,j+1}$ of a minimizer.
We fix a shortest geodesic connecting $x_{ij}$ and $x_{i,j+1}.$ 
We define the points $z,z'$ on this geodesic as in the proof of Theorem~\ref{thm:ConvergenceAlgA}. 
Then we see that we may apply the same arguments as in that proof to reduce to the situation
where $x_{ij}, z', z, x_{i,j+1}$ lie on the geodesic in this ordering.  
Then the estimate \eqref{eq:EstimGeodesics} is true since we chose a shortest geodesic.
This implies $ d(z,z') \leq  d(y^\ast_{ij},y^\ast_{i,j+1})$.
On the other hand, $y^\ast$ is a minimizer of $G_{ij}$. This implies 
$ d(y^\ast_{ij},y^\ast_{i,j+1}) \leq  d(z,z').$ 
Hence, equality holds and $y^\ast_{ij}$ and $y^\ast_{i,j+1}$ lie on a shortest geodesic connecting $x_{ij}$ and $x_{i,j+1}.$
The statements for the functionals $H_{ij}$ and $F$ follow analogously. 
$\Box$

The goal of the rest of this section is to show that Algorithm~\ref{alg:parallel_splitting} and its fast variant (with the approximate mean calculation from \eqref{eq:GeoAvARiem})
converge in a Hadamard space. To this end, we first show a generic convergence statement for parallel proximal point algorithms. 
\begin{theorem} \label{thm:convergenceParallel}
We consider a convex function $g$ defined on a Hadamard space which has a minimizer.
Let $g=g_1+\ldots+g_n$ and assume that all summands are convex and lower semicontinuous. 
Assume further that the positive parameter sequence $\lambda = (\lambda_1,\ldots)$ is square-summable but not summable. 
We consider the iteration 
\begin{equation}\label{eq:ConvParallel} 
x^{k+1} = \mean \left( \prox_{\lambda_k g_1} x^k,\ldots, \prox_{\lambda_k g_n} x^k  \right).   
\end{equation}
Here $\mean$ is the intrinsic mean in the Hadamard space defined by \eqref{eq:DefMean}.
If there is a constant $L>0$ such that, for all $g_i$ and all $k$,
\begin{equation}\label{eq:ConditionConvParallel} 
  g_i(x^k) - g_i(\prox_{\lambda_k g_1} x^k) \leq L  \cdot d(x^k,\prox_{\lambda_k g_1} x^k).   
\end{equation}
then the iteration \eqref{eq:ConvParallel} converges to a minimizer of $g$. 
\end{theorem}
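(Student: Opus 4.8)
The plan is to run the standard quasi-Fej\'er monotonicity argument for inexact proximal/subgradient iterations, adapted to the CAT(0) setting. Two structural facts carry the proof and I would quote them from the literature on Hadamard spaces (see, e.g., \cite{bavcak2013computing,Sturm2003he}) rather than reprove them. The first is the \emph{resolvent inequality}: if $h$ is convex and lower semicontinuous and $y=\prox_{\lambda h}x$, then for every $z$,
\begin{equation*}
  d^2(y,z)\;\leq\; d^2(x,z)-d^2(x,y)+2\lambda\bigl(h(z)-h(y)\bigr).
\end{equation*}
The second is the \emph{variance inequality} for the intrinsic mean: if $b=\mean(p_1,\dots,p_n)$ then for every $z$,
\begin{equation*}
  d^2(b,z)\;\leq\;\tfrac1n\sum_{i=1}^n d^2(p_i,z)-\tfrac1n\sum_{i=1}^n d^2(p_i,b)\;\leq\;\tfrac1n\sum_{i=1}^n d^2(p_i,z).
\end{equation*}

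First I would fix a minimizer $z^\ast$ of $g$ and write $p_i^k=\prox_{\lambda_k g_i}x^k$, so $x^{k+1}=\mean(p_1^k,\dots,p_n^k)$. Applying the variance inequality with $z=z^\ast$ and then the resolvent inequality to each $p_i^k$ (with $h=g_i$, $\lambda=\lambda_k$), I get
\begin{equation*}
  d^2(x^{k+1},z^\ast)\;\leq\; d^2(x^k,z^\ast)-\tfrac1n\sum_i d^2(x^k,p_i^k)+\tfrac{2\lambda_k}{n}\sum_i\bigl(g_i(z^\ast)-g_i(p_i^k)\bigr).
\end{equation*}
Now split $g_i(z^\ast)-g_i(p_i^k)=\bigl(g_i(z^\ast)-g_i(x^k)\bigr)+\bigl(g_i(x^k)-g_i(p_i^k)\bigr)$: summing over $i$, the first group telescopes to $g(z^\ast)-g(x^k)\leq 0$, while the second group is bounded by $L\sum_i d(x^k,p_i^k)$ by hypothesis \eqref{eq:ConditionConvParallel}. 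Completing the square through $-a^2+2\lambda_k L a\leq\lambda_k^2 L^2$ applied to each $a=d(x^k,p_i^k)$ absorbs the descent reserve $-\tfrac1n\sum_i d^2(x^k,p_i^k)$ and yields the clean quasi-Fej\'er estimate
\begin{equation*}
  d^2(x^{k+1},z^\ast)\;\leq\; d^2(x^k,z^\ast)+L^2\lambda_k^2-\tfrac{2\lambda_k}{n}\bigl(g(x^k)-g(z^\ast)\bigr).
\end{equation*}

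From this the conclusion is routine. Dropping the nonpositive last term and using $\sum_k\lambda_k^2<\infty$ gives, by the standard Fej\'er lemma, that $d^2(x^k,z^\ast)$ converges for every minimizer $z^\ast$; in particular $\{x^k\}$ is bounded. Summing the estimate instead gives $\sum_k\lambda_k\bigl(g(x^k)-g(z^\ast)\bigr)<\infty$, and since $\sum_k\lambda_k=\infty$ we get $\liminf_k g(x^k)=\min g$. Choosing a subsequence along which $g(x^{k_j})\to\min g$, boundedness yields a further $\Delta$-convergent subsequence (in the locally compact case, an ordinary convergent one) with limit $\bar x$; lower semicontinuity of $g$ (it is $\Delta$-lsc, being a finite sum of convex lsc functions on a Hadamard space) forces $g(\bar x)=\min g$, so $\bar x\in\argmin g$. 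Since $d^2(x^k,\bar x)$ converges and tends to $0$ along the subsequence, it tends to $0$, i.e., $x^k\to\bar x$.

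The step I expect to be the main obstacle is the bookkeeping of the previous paragraph: assembling the resolvent inequality, the variance inequality, and the Lipschitz-type slack \eqref{eq:ConditionConvParallel} into a single quasi-Fej\'er inequality with a summable error term. In particular one has to verify that the averaging step does not destroy the quantity $-\tfrac1n\sum_i d^2(x^k,p_i^k)$ — which is exactly why the sharper variance inequality (the version with the subtracted spread term, or at least the non-expansive version $d^2(b,z)\le\tfrac1n\sum_i d^2(p_i,z)$) is needed, rather than a naive triangle-inequality bound. Everything afterwards — the Fej\'er convergence lemma and the passage from $\liminf$ to full convergence via a weakly (or compactly) convergent subsequence — is standard.
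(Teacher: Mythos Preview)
Your proof is correct and follows essentially the same route as the paper: variance (Jensen) inequality for the intrinsic mean, the resolvent inequality for each prox, the Lipschitz-type bound \eqref{eq:ConditionConvParallel} to control the slack, and then the standard Fej\'er lemma (the paper's Lemma~\ref{lem:Bac26} and Lemma~\ref{lem:Bac32}, both taken from \cite{bavcak2013computing}). The only minor difference is that you invoke the sharper resolvent inequality retaining the $-d^2(x^k,p_i^k)$ term and complete the square, whereas the paper drops that term and instead bounds $g_i(x^k)-g_i(p_i^k)\le 2\lambda_k L^2$ directly by combining the prox-minimizer inequality with the Lipschitz condition, arriving at $4\lambda_k^2 L^2$ in place of your $\lambda_k^2 L^2$; the remainder of the argument is identical.
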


The proof of this statement is an adaption of the proof of Theorem 3.4 in \cite{bavcak2013computing} to the parallel setting.
In \cite{bavcak2013computing}, the applied method of proof is addressed to \cite{Bertsekas2011in}. We need the following two lemmas which are 
Lemma 2.6 and Lemma 3.2 of \cite{bavcak2013computing}. 
  
\begin{lemma}\label{lem:Bac26}
 Let $a_k,$ $b_k,$ $c_k$ be sequences of positive numbers. Assume that $\sum c_k < \infty$ and that, for all $k$,
 $a_{k+1} < a_k - b_k + c_k.$ Then the sequence $a_k$ converges and $\sum b_k < \infty.$  
\end{lemma}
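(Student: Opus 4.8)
The plan is to convert the perturbed decrease inequality into a genuinely monotone sequence by folding the summable error terms into the iterate. Since $\sum_k c_k<\infty$, the tail sums $\sigma_k:=\sum_{j\ge k}c_j$ are finite, decrease monotonically to $0$, and satisfy $\sigma_{k+1}=\sigma_k-c_k$. I would introduce the auxiliary sequence $d_k:=a_k+\sigma_k$, which again consists of positive numbers because $a_k>0$ and each $c_j>0$.

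First I would verify that $(d_k)$ is non-increasing: combining the hypothesis $a_{k+1}\le a_k-b_k+c_k$ with $\sigma_{k+1}=\sigma_k-c_k$ gives
\[
 d_{k+1}=a_{k+1}+\sigma_{k+1}\le (a_k-b_k+c_k)+(\sigma_k-c_k)=d_k-b_k\le d_k,
\]
where the last step uses $b_k\ge 0$. A non-increasing sequence bounded below by $0$ converges, so $d_k\to d_\infty$ for some $d_\infty\ge 0$.

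Next, since $\sigma_k\to 0$, the identity $a_k=d_k-\sigma_k$ immediately yields $a_k\to d_\infty$, which is the first assertion. For the second assertion I would telescope the sharper inequality $d_{k+1}\le d_k-b_k$: summing it from $k=1$ to $N$ gives $\sum_{k=1}^N b_k\le d_1-d_{N+1}\le d_1$. As the partial sums of the nonnegative series $\sum_k b_k$ are uniformly bounded by $d_1=a_1+\sigma_1<\infty$, the series converges.

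There is no genuine obstacle here; this is a standard Robbins--Siegmund-type estimate, and the only point requiring care is the bookkeeping with the tail sums — in particular, finiteness of $\sigma_k$ is exactly where $\sum_k c_k<\infty$ is used, and the telescoping simultaneously delivers $\sum_k b_k<\infty$ at no extra cost. An alternative route avoiding $\sigma_k$ is to iterate the inequality over a block $k=m,\dots,M$, bounding $a_M$ by $a_m$ plus a tail of $\sum_k c_k$, and then let $m,M\to\infty$ to squeeze $\limsup a_k$ against $\liminf a_k$; but the auxiliary-sequence argument is shorter and also yields summability of $(b_k)$ directly.
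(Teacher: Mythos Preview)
Your argument is correct: the tail-sum trick $d_k=a_k+\sigma_k$ turns the quasi-monotone inequality into a genuine monotone one, and both conclusions drop out cleanly. Note, however, that the paper does not supply its own proof of this lemma; it simply quotes it as Lemma~2.6 of Ba\v{c}\'ak's paper \cite{bavcak2013computing}, so there is no ``paper's proof'' to compare against --- your proposal is a complete and standard proof of the cited result.
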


\begin{lemma}\label{lem:Bac32}
 Consider a convex and lower semicontinuous function $h$ on a (locally compact) Hadamard space. Then,  
 \begin{align}\label{lem:BacEstimate}
  h(\prox_{\lambda h} x) - h (y)  \leq  \tfrac{1}{2 \lambda}   \left( d(x,y)^2  -  d( \prox_{\lambda h} x  ,y)^2 \right),
 \end{align}
 for any $y$ in the Hadamard space.
\end{lemma}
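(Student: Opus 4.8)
The plan is to establish \eqref{lem:BacEstimate} by a first-variation argument along the geodesic running from the proximal point to $y$, using only the convexity of $h$ and the comparison property that defines a Hadamard space. Write $p = \prox_{\lambda h} x$ for the minimizer of the functional $z \mapsto \lambda h(z) + \tfrac12 d(x,z)^2$. This minimizer exists and is unique in the present setting: the squared-distance term is strongly convex along geodesics in a Hadamard space, so the whole functional is strongly convex (uniqueness), while existence follows from lower semicontinuity together with the coercivity of the quadratic term $d(x,\cdot)^2$ (and local compactness). Fix an arbitrary point $y$ and let $\gamma \colon [0,1] \to M$ be the geodesic with $\gamma(0) = p$ and $\gamma(1) = y$; set $y_t = \gamma(t)$.

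First I would record the two estimates that drive the argument. Since $h$ is convex along geodesics, $h(y_t) \leq (1-t)\,h(p) + t\,h(y)$. Next, the ``thin triangle'' inequality characterizing a Hadamard space, stated above in its midpoint form, has the standard general-parameter consequence $d(x,y_t)^2 \leq (1-t)\,d(x,p)^2 + t\,d(x,y)^2 - t(1-t)\,d(p,y)^2$ for every $t \in [0,1]$.

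Then I would exploit that $p$ minimizes the proximal functional: for each $t$ we have $\lambda h(p) + \tfrac12 d(x,p)^2 \leq \lambda h(y_t) + \tfrac12 d(x,y_t)^2$. Substituting the two preceding estimates into the right-hand side, the terms carrying the factor $(1-t)$ cancel against the left-hand side; after dividing by $t > 0$ one is left with
\begin{equation*}
\lambda h(p) + \tfrac12 d(x,p)^2 \leq \lambda h(y) + \tfrac12 d(x,y)^2 - \tfrac12 (1-t)\, d(p,y)^2.
\end{equation*}
Letting $t \to 0^{+}$ (legitimate since $d(x,\cdot)^2$ is continuous along $\gamma$) and then discarding the nonpositive term $-\tfrac12 d(x,p)^2$ on the left yields $\lambda\bigl(h(p) - h(y)\bigr) \leq \tfrac12\bigl(d(x,y)^2 - d(p,y)^2\bigr)$, which is exactly \eqref{lem:BacEstimate} after dividing by $\lambda$.

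I expect the only genuinely delicate point to be the passage from the midpoint comparison inequality (the form in which the Hadamard condition is stated in the excerpt) to the general-$t$ version used above; this is a standard fact for CAT(0) spaces and can either be cited from \cite{Sturm2003he,bridson1999metric} or obtained by dyadic approximation of $\gamma$. The existence and uniqueness of $p$ likewise deserve a sentence but are routine in the locally compact Hadamard setting. Everything else is the algebraic cancellation indicated above.
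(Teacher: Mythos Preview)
Your argument is correct and is in fact the standard proof of this resolvent inequality in CAT(0) spaces. The paper, however, does not prove the lemma at all: it is simply quoted as Lemma~3.2 of \cite{bavcak2013computing}. So there is no ``paper's own proof'' to compare against; you have supplied exactly the argument one finds in the cited reference.

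Two small remarks on presentation. First, the phrase ``discarding the nonpositive term $-\tfrac12 d(x,p)^2$ on the left'' is slightly off: on the left-hand side the term appears as $+\tfrac12 d(x,p)^2$, which is nonnegative, and dropping a nonnegative quantity from the smaller side of a $\leq$ inequality is what preserves it. The computation is fine; only the sign description is inverted. Second, your intermediate step in fact yields the sharper bound
\[
\lambda\bigl(h(p)-h(y)\bigr) \;\leq\; \tfrac12\bigl(d(x,y)^2 - d(p,y)^2 - d(x,p)^2\bigr),
\]
which is the form often stated in the CAT(0) literature; the lemma as written follows by discarding the last term. You might as well record the stronger version.
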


Equipped with these preparations, we show Theorem~\ref{thm:convergenceParallel}.

{\em Proof of Theorem~\ref{thm:convergenceParallel}.} 
The function $x \mapsto d(x,y)^2$ is uniformly convex (cf. \cite{Sturm2003he}). Thus, using Jensen's inequality (cf. \cite{Sturm2003he}), 
we get, for the intrinsic mean in the Hadamard space $x^{k+1}$, that
\begin{equation}
    d(x^{k+1},y)^2 \leq \sum_{i=1}^n \tfrac{1}{n} d(\prox_{\lambda_k g_i} x^k,y)^2, \label{eq:UseJensen}
\end{equation}
for all $y.$ In the following, we use the notation $ \tilde{x}^{k+1}_i = \prox_{\lambda_k g_i} x^k$
for the proximal mapping of $g_i$ at the previous iterate $x^k.$
Using Lemma~\ref{lem:Bac32}, we estimate 
\begin{align}
   d( \tilde{x}^{k+1}_i  ,y)^2 &\leq d(x^k,y)^2 - 2 \lambda_k \left( g_i(\tilde{x}^{k+1}_i)-g_i(y)    \right) \label{eq:UseBac32}   \\
   &= d(x^k,y)^2 - 2 \lambda_k \left( g_i(x^k) -g_i(y) \right) + 2 \lambda_k \left( g_i(x^k) - g_i(\tilde{x}^{k+1}_i) \right). \notag
\end{align}
We combine the estimates \eqref{eq:UseJensen} and \eqref{eq:UseBac32} to obtain
\begin{align}
d(&x^{k+1},y)^2  \label{eq:EstAa} \\
&\leq \frac{1}{n} \sum_{i=1}^n d(x^k,y)^2 - \frac{2 \lambda_k}{n} \sum_{i=1}^n \left( g_i(x_n)-g_i(y) \right) + \frac{2 \lambda_k}{n} \sum_{i=1}^n  \left(g_i(x_n)-g_i(\tilde{x}^{k+1}_i) \right) \notag  \\
& = d(x^k,y)^2 - \frac{2 \lambda_k}{n} \left( g(x_n)-g(y) \right) + \frac{2 \lambda_k}{n} \sum_{i=1}^n  \left(g_i(x_n)-g_i(\tilde{x}^{k+1}_i) \right).   \notag
\end{align}
The next goal is to estimate the last summand on the right hand side. To this end, we use that, by assumption,
$$
  g_i(x^k) - g_i(\tilde{x}^{k+1}_i) \leq L   d(x^k,\tilde{x}^{k+1}_i).   
$$ 
Furthermore, since $\tilde{x}^{k+1}_i$ minimizes the expression in the definition of the proximal mapping, we obtain
$$
  g_i(\tilde{x}^{k+1}_i) + \tfrac{1}{2 \lambda_k} d(x^k,\tilde{x}^{k+1}_i) \leq g_i(x^k).
$$ 
Applying these estimates successively yields 
\begin{align}
g_i(x^k) - g_i(\tilde{x}^{k+1}_i) &\leq L   d(x^k,\tilde{x}^{k+1}_i)   \label{eq:EstLastTR}  \\
                                  &\leq L 2 \lambda_k \frac{g_i(x^k)- g_i(\tilde{x}^{k+1}_i)}{ d(x^k,\tilde{x}^{k+1}_i)} \leq 2 \lambda_k L^2. \notag
\end{align}
This allows us to estimate the last summand on the right-hand side of \eqref{eq:EstAa} by  
\begin{align*}
\frac{2 \lambda_k}{n} \sum_{i=1}^n  \left(g_i(x_n)-g_i(\tilde{x}^{k+1}_i) \right) \leq \frac{2 \lambda_k}{n} \sum_{i=1}^n 2 \lambda_k L^2 = 4 \lambda_k^2 L^2.   
\end{align*}
Thus, \eqref{eq:EstAa} now reads 
\begin{align}  \label{eq:ABCest}
    d(&x^{k+1},y)^2  \leq  d(x^k,y)^2 - \frac{2 \lambda_k}{n} \left( g(x_n)-g(y) \right) + 4 \lambda_k^2 L^2.
\end{align}

Next, we consider a minimizer $y^{\ast}$ of $g$ and plug it into \eqref{eq:ABCest} above. Then $g(x_n)-g(y^{\ast})\geq 0,$ and we
may apply Lemma~\ref{lem:Bac26} to \eqref{eq:ABCest}. This yields that $d(x^k,y^{\ast})$ converges as $n \to \infty$ and that 
\begin{equation}  \label{eq:Res26App}
  \sum_{n \in \mathbb{N}} \frac{2 \lambda_k}{n} \left( g(x_n)-g(y) \right)    < \infty.
\end{equation}
Since the parameter sequence $\lambda=(\lambda_1,\ldots)$ is not summable, \eqref{eq:Res26App} implies that $g(x^{k_l}) \to g(y^{\ast})$ on a subsequence $k_l.$
Since $d(x^k,y^{\ast})$ is bounded and the underlying space is locally compact, we may choose another subsequence $k_{l_r}$
such that $x^{k}$ converges on this subsequence; call the limit $x^{\ast}.$ Then by the lower semicontinuity of $g,$ we have $g(x^{\ast}) \leq g(y^{\ast}).$  
Then, since $y^{\ast}$ is a minimizer, also $x^{\ast}$ is a minimizer. We apply Lemma~\ref{lem:Bac26} for a second time, but now for $y=x^{\ast}.$
As a result $d(x^k,x^\ast)$ converges. Moreover, $d(x^k,x^\ast) \to 0,$ since this is true on a subsequence.
This completes the proof. 
\quad $\Box$

For the fast variant of the parallel proximal algorithm introduced in Section~\ref{sec:SpeedUp} we replaced the intrinsic mean by the approximation \eqref{eq:GeoAvARiem}.
In order to obtain convergence of the corresponding algorithm, we need the following result.

\begin{theorem} \label{thm:convergenceParallelFast}
  The statement of Theorem~\ref{thm:convergenceParallel} remains true if we replace the intrinsic mean \eqref{eq:DefMean} by its approximation \eqref{eq:GeoAvARiem}.
\end{theorem}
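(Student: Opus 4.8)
The plan is to retrace the proof of Theorem~\ref{thm:convergenceParallel} and check that the only place where the intrinsic mean was used is the Jensen-type inequality \eqref{eq:UseJensen}, and then to establish the analogous inequality for the iterated-geodesic-average construction \eqref{eq:GeoAvARiem}. Concretely, the mean entered the argument solely through the estimate
\begin{equation*}
 d(\mean(z_1,\ldots,z_n),y)^2 \leq \tfrac1n \sum_{i=1}^n d(z_i,y)^2,
\end{equation*}
after which everything (the application of Lemma~\ref{lem:Bac32}, the Lipschitz-type bound \eqref{eq:ConditionConvParallel}, the telescoping via Lemma~\ref{lem:Bac26}, and the final subsequence/local-compactness argument) went through with no further reference to the specific averaging operator. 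So it suffices to prove that the approximate mean $A(z_1,\ldots,z_n)$ obtained from \eqref{eq:GeoAvARiem} satisfies the \emph{same} convexity estimate $d(A(z_1,\ldots,z_n),y)^2 \leq \sum_i w_i\, d(z_i,y)^2$ for suitable convex weights $w_i\geq 0$, $\sum_i w_i=1$ (in the concrete case \eqref{eq:approx_mean_Formula} these are the weights $0.2,0.2,0.2,0.2,0.2$ coming from the binary-tree decomposition \eqref{eq:Setof54geoAn} of the Euclidean mean, so that $w_i=1/n$ is recovered and \eqref{eq:UseJensen} holds verbatim).

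First I would record the single building block: for two points $u,v$ in a Hadamard space and $t\in[0,1]$, the point $[u,v]_{t\,d(u,v)}=\conv_t(u,v)$ satisfies, for every $y$,
\begin{equation*}
 d\bigl(\conv_t(u,v),y\bigr)^2 \leq (1-t)\, d(u,y)^2 + t\, d(v,y)^2 - t(1-t)\, d(u,v)^2 \leq (1-t)\,d(u,y)^2 + t\,d(v,y)^2.
\end{equation*}
This is exactly the ``not fat triangles'' inequality quoted in Section~\ref{sec:CHproofsSec} (for $t=1/2$ it is stated there literally; the general $t\in[0,1]$ version is the CAT$(0)$ comparison inequality, available in the references given, e.g.\ \cite{bridson1999metric,Sturm2003he}). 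Second, I would apply this inequality recursively along the binary tree defining \eqref{eq:GeoAvARiem}: at each internal node with children values $p,p'$ and parameter $t$, we get $d(\conv_t(p,p'),y)^2 \leq (1-t)d(p,y)^2 + t\, d(p',y)^2$, and substituting the (inductively established) bounds for $d(p,y)^2$ and $d(p',y)^2$ in terms of the leaves $z_i$ yields a bound $\sum_i w_i\, d(z_i,y)^2$ where the $w_i$ are precisely the products of the edge-weights along the root-to-leaf paths — which is the very same bookkeeping that makes the \emph{Euclidean} iterated convex combination \eqref{eq:GeoAvAbstr} equal to $\sum_i \tfrac1n z_i$. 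Hence $\sum_i w_i=1$ and, for the decomposition \eqref{eq:Setof54geoAn}, $w_i=1/5$ for all $i$.

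Third, with the inequality $d(A(\tilde x^{k+1}_1,\ldots,\tilde x^{k+1}_n),y)^2 \leq \sum_i w_i\, d(\tilde x^{k+1}_i,y)^2$ in hand, I would substitute it for \eqref{eq:UseJensen} and run the proof of Theorem~\ref{thm:convergenceParallel} unchanged: combine with \eqref{eq:UseBac32}, use $\sum_i w_i=1$ to collapse $\sum_i w_i(g_i(x^k)-g_i(y))$ into $g(x^k)-g(y)$ (this uses $g=g_1+\cdots+g_n$ together with $w_i=1/n$ — which is why one insists the tree weights reproduce the uniform mean, not just some convex combination), bound the error term by $4\lambda_k^2 L^2$ via \eqref{eq:EstLastTR}, and finish with Lemma~\ref{lem:Bac26} and the local-compactness subsequence argument exactly as before. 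I expect the main (and essentially only) obstacle to be a clean justification of the general-$t$ CAT$(0)$ comparison inequality and of the claim that the tree weights for the chosen decomposition coincide with $1/n$; both are standard, but they must be stated carefully since the paper's Section~\ref{sec:CHproofsSec} only quoted the $t=1/2$ case explicitly. If one wished to avoid even the mild dependence on $w_i=1/n$, one could alternatively keep general convex weights $w_i$ and note that $\sum_i w_i(g_i(x^k)-g_i(y))\geq \min_i w_i\,(g(x^k)-g(y))$ when each $g_i(x^k)\geq g_i(y)$ fails in general, so the uniform-weight identity really is the right thing to use; I would therefore emphasize that \eqref{eq:approx_mean_Formula} is built from a decomposition of the \emph{arithmetic} mean precisely for this reason.
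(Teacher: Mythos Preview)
Your proposal is correct and matches the paper's own proof essentially step for step: the paper also observes that only \eqref{eq:UseJensen} needs to be re-established, invokes the convexity of $z\mapsto d(z,y)^2$ along geodesics to unfold the nested averages in \eqref{eq:GeoAvARiem} top-down, identifies the resulting coefficients as products of the $t'_r$ and $(1-t'_r)$ that reproduce the uniform weights $1/n$, and then defers to the remainder of the proof of Theorem~\ref{thm:convergenceParallel}. Your additional remarks on why one genuinely needs $w_i=1/n$ (rather than arbitrary convex weights) and on the general-$t$ CAT$(0)$ inequality are accurate elaborations but do not depart from the paper's argument.
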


\begin{proof}
We show that \eqref{eq:UseJensen} remains true if we replace the intrinsic mean by the construction given in \eqref{eq:GeoAvARiem}.
By the convexity of the function $a(z) = d(z,y)^2$ we have that $a([z,y]_{td(z,y)}) \leq (1-t) a(z) + t a(y)$ for all $z,y \in M$. 
We successively apply this inequality to every geodesic average in \eqref{eq:GeoAvARiem} (in a top-down fashion). 
As a first step, we  obtain
\begin{equation*}
   a(x) \leq (1-{t'_1}) a([\ldots,\ldots [x_{i_l},x_{j_l}]_{t'_l} \ldots]_{t'_2})  + {t'_1} a([\ldots,\ldots]_{t'_3}).
\end{equation*} 
Proceeding further, we get 
\begin{equation} \label{eq:i22}
   a(x) \leq  c_1 a(x_1) + \ldots + c_n a(x_n),
\end{equation} 
where the $c_k$ are products with factors $t'_r$ and $(1-t'_r).$ Reversing the construction of the $t'_r$ in \eqref{eq:GeoAvAbstr},
we see that, for each $x_k$, the factor $c_k$ equals $1/n.$ Plugging the definition $a(z)=d(z,y)^2$ and $c_k = 1/n$
in \eqref{eq:i22} yields \eqref{eq:UseJensen} for the construction \eqref{eq:GeoAvARiem}.
Then we can follow the rest of the proof of Theorem~\ref{thm:convergenceParallel} to conclude the assertion of the present theorem.
\end{proof}

Our main results concerning the convergence of the parallel proximal algorithm are as follows.
\begin{theorem} \label{eq:ConvergenceTheoremParaGeod}
  The parallel proximal algorithm for $\ell^p$-TV$^q$ minimization (Algorithm~\ref{alg:parallel_splitting}) and its approximative variant 
  converge towards a minimizer in every (locally compact) Hadamard space. 
  The statement remains true when using Huber regularization and Huber data terms based on \eqref{eq:huber}.
\end{theorem}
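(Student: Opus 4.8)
The plan is to derive this theorem as a corollary of Theorem~\ref{thm:convergenceParallel} and Theorem~\ref{thm:convergenceParallelFast}, in the same way that Theorem~\ref{thm:ConvergenceAlgA} was derived from \cite[Theorem 3.4]{bavcak2013computing}. First I would record the decomposition underlying Algorithm~\ref{alg:parallel_splitting}, namely $g = F + G_e + G_o + H_e + H_o$ (resp.\ $g = F_h + G^h_e + G^h_o + H^h_e + H^h_o$ in the Huber case). The $\ell^p$-$\mathrm{TV}^q$ functional $g$ is convex on a Hadamard space and possesses a minimizer by the discussion following \eqref{eq:minimization_problem}; each of the five summands is a finite sum of terms of the form $\tfrac1q d^q(\cdot,\cdot)$ (or $h\circ d(\cdot,\cdot)$, or $\tfrac1p d^p(\cdot,f_{ij})$), hence convex and continuous, in particular lower semicontinuous. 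Since $G_e = \sum_{j\text{ even}}\sum_i G_{ij}$ is a sum over pairwise disjoint index pairs, its proximal mapping decouples into the proximal mappings of the individual $G_{ij}$; these are given by the geodesic-averaging formula established in the proof of Theorem~\ref{thm:ConvergenceAlgA}, which yields \eqref{eq:ProxGeGo}, and likewise for $G_o, H_e, H_o$ and $F$ (and for their Huber analogues via \eqref{eq:ProxOfGijHuber}, \eqref{eq:ProxOfHuberRegT}, \eqref{eq:ProxOfHuber}). Thus the iteration \eqref{eq:ConvParallel} applied to these five summands is precisely Algorithm~\ref{alg:parallel_splitting}.

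The remaining hypothesis of Theorem~\ref{thm:convergenceParallel} to verify is the uniform Lipschitz-type estimate \eqref{eq:ConditionConvParallel}. Here I would argue that all iterates stay in a fixed compact set: every point produced by Algorithm~\ref{alg:parallel_splitting} (the auxiliary points $z^{(i)}$ as well as the updated $x_{ij}$, computed either as an intrinsic mean or by the construction \eqref{eq:approx_mean_Formula}) is obtained as a geodesic average, hence as a convex combination in the metric sense, of previous iterates, so by induction the iterates lie in the closed convex hull $C$ of the data $(f_{ij})_{ij}$. In a locally compact Hadamard space $C$ is compact, since such spaces are proper and the closed convex hull of a bounded set is bounded. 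Because each $g_i$ is continuous it is Lipschitz on the compact set $C$ with some constant $L$, whence $g_i(x^k) - g_i(\prox_{\lambda_k g_i} x^k) \le L\, d(x^k,\prox_{\lambda_k g_i} x^k)$ for all $i$ and all $k$, which is \eqref{eq:ConditionConvParallel}. Theorem~\ref{thm:convergenceParallel} then gives convergence of Algorithm~\ref{alg:parallel_splitting} with the intrinsic mean; for the fast variant I would invoke Theorem~\ref{thm:convergenceParallelFast} after noting that the approximate mean \eqref{eq:approx_mean_Formula} is an instance of \eqref{eq:GeoAvARiem} whose five weights reduce to $c_k = 1/5$, so the identical argument applies.

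For the Huber data and regularizing terms I would simply replace $F, G_{ij}, H_{ij}$ by $F_h, G^h_{ij}, H^h_{ij}$ throughout: the Huber function $h$ in \eqref{eq:huber} is convex and nondecreasing on $[0,\infty)$, so $h\circ d$ is convex and continuous, the proximal mappings are the geodesic averages \eqref{eq:ProxOfGijHuber}, \eqref{eq:ProxOfHuberRegT}, \eqref{eq:ProxOfHuber} (again from the proof of Theorem~\ref{thm:ConvergenceAlgA}), the iterates again remain in $C$, and continuity of $h\circ d$ on $C$ again furnishes the Lipschitz constant in \eqref{eq:ConditionConvParallel}. I expect the only genuinely delicate point to be the ``iterates stay in a compact set'' step: one must check that the intrinsic mean of points of the convex hull $C$ again lies in $C$ (true because $C$ is convex and the intrinsic mean of points of a convex set is a convex combination of them), and that $C$ is compact, which is where local compactness of the Hadamard space enters. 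Everything else is a direct transcription of the arguments already given for Theorems~\ref{thm:ConvergenceAlgA}, \ref{thm:convergenceParallel}, and~\ref{thm:convergenceParallelFast}.
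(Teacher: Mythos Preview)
Your proposal is correct and follows essentially the same route as the paper: decouple the proximal mappings of $G_e,G_o,H_e,H_o$ (and their Huber analogues) via the disjoint-index structure and the computations from the proof of Theorem~\ref{thm:ConvergenceAlgA}, then verify \eqref{eq:ConditionConvParallel} by arguing that all iterates remain in the (compact) convex hull of the data so that continuity yields a uniform Lipschitz bound, and finally invoke Theorems~\ref{thm:convergenceParallel} and~\ref{thm:convergenceParallelFast}. Your treatment is in fact slightly more explicit than the paper's on the points that the intrinsic mean of points of $C$ stays in $C$ and that local compactness is what makes $C$ compact.
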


\begin{proof}
As a first step, we have to show that the proximal mappings of the functions $G_e,G_o,H_e,H_o$ given in
and below \eqref{eq:DefGeGo} are in fact given by \eqref{eq:ProxGeGo} and the explanations following \eqref{eq:ProxGeGo}.
We only consider $G_e$ since the other cases are analogous.  
We use the fact that $G_e$ is a sum of the functions $G_{ij}.$ 
More precisely, $G_e = \sum_{j:j \text{ even} }$ $\sum_i G_{ij}.$
Considering this form of $G_e,$ we see that the minimization problem in the definition of the proximal mapping separates 
into minimization problems which require minimizing expressions of the form \eqref {eq:reducedMinproblem}. 
Hence, the $(i,j)^{th}$ component of the proximal mapping of $G_e$ equals the corresponding component of the proximal mapping of $G_{ij}.$
This proximal mapping has been considered in the proof of Theorem~\ref{thm:ConvergenceAlgA}.
Its $(i,j)^{th}$ component agrees with the expression in \eqref{eq:ProxGeGo}. This shows \eqref{eq:ProxGeGo}.

For the Huber regularizing term, we consider the functionals $G^h_e,G^h_o,H^h_e,H^h_o$ defined in analogy to $G_e,G_h,H_e,H_o$ by replacing $d^q$ by
$h \circ d$ where $d$ is the Huber function \eqref{eq:huber} in \eqref{eq:DefGeGo}. Then following the argument for $G_e$ in the previous
paragraph, we see that the $(i,j)^{th}$ component of the proximal mappings of $G_e^h$ agrees with the corresponding component of the proximal mapping of $G^h_{ij}$ 
given by \eqref{eq:ProxOfGijHuber},\eqref{eq:ProxOfHuberRegT}. Analogous statements are true for $G^h_o,H^h_e,H^h_o$.
The proximal mappings of the $\ell^p$ type data term $F$ and the Huber data term $F_h$ have been shown to agree with \eqref{eq:ProxOfF} and \eqref{eq:ProxOfHuber}, respectively, in the proof 
of Theorem~\ref{thm:ConvergenceAlgA}.  

The next step is to apply Theorem~\ref{thm:convergenceParallel} and Theorem~\ref{thm:convergenceParallelFast} (for the approximative variant).
Since the algorithm only produces convex combinations and intrinsic means, the iterates produced
by the algorithm stay in the convex hull of the data $(f_{i,j})_{i,j}.$ So the involved functions (which are all continuous) are Lipschitz on this convex
hull which means that \eqref{eq:ConditionConvParallel} is fulfilled. Hence, we may apply Theorem~\ref{thm:convergenceParallel} and Theorem~\ref{thm:convergenceParallelFast}
and conclude the assertion of the theorem.
\end{proof}

\section{Applications}
\label{sec:Applications}

In this section, we apply the algorithms proposed in this paper to concrete manifolds which frequently occur in applications.
The manifolds we consider are the space of positive matrices $\Pos,$ the spheres $S^1$ and $S^2,$ the product space $S^1 \times \R^2$ (which appears in the context of nonlinear color models)
as well as the rotation group.

In order to make 
Algorithm~\ref{alg:cyclic_splitting} and Algorithm~\ref{alg:parallel_splitting} work in a specific manifold we have to compute geodesics and distances on this manifolds.
This is accomplished using the Riemannian exponential mapping and its inverse. 
Recall that the exponential mapping $\exp_{a}: T_a M \to M$ returns the point $\exp_{a}v$ on the manifold which we obtain 
when following the unit speed geodesic starting at $a$ into the direction of the given tangent vector $v$ for time $\|v\|_a$. 
Conversely, the inverse of the exponential mapping $\exp^{-1}_{a}:M \to T_a M$
gives us the tangent vector $\exp^{-1}_{a}b$ at the point $a$ 
which leads to the point $b$ when following the geodesic with respect to this tangent vector for time $\| \exp^{-1}_{a}b \|_a.$   
Using these mappings, the point $[a,b]_t$ reached on the unit speed geodesic 
joining $a$ and $b$ after time $t$ is given by 
\begin{equation}\label{eq:geodesic_app}
	[a,b]_t =  \exp_{a}(t \cdot \exp^{-1}_a(b) ).
\end{equation}
In order to calculate the geodesic path length $t,$
we further have to calculate distances on the manifold under consideration (cf.~Table~\ref{tab:waylength_reg} and Table~\ref{tab:waylength_data}).
To this end, we use that the distance between points $a$ and $b$ is given by the length of the tangent vector $\exp^{-1}_a(b),$ 
i.e.,
\begin{equation}\label{eq:dist_app}
	d(a,b) = \| \exp^{-1}_a(b) \|_a.
\end{equation}
Here the length is measured with respect to the Riemannian metric in the tangent space of $a$. 
Hence, in order to apply our algorithms for a specific data space,
we only need to instantiate the exponential mapping and its inverse for the corresponding manifold. 
For the data spaces considered in this article, the exponential mappings and their inverse have closed expressions
involving only basic arithmetic operations such as trigonometric functions or matrix exponentials.

The numerical experiments were conducted on a Macbook using a single core of a  2.6 GHz Intel Core i7 processor. 
(Parallelized implementations of our algorithms are out of the scope of this article.)
For the experiments in Figure \ref{fig:LCh},  we optimized the total variation parameter $\alpha$ with respect to the peak signal-to-noise ratio.
In the other experiments, $\alpha$ was determined empirically.
A simple choice for the sequence $\lambda_r$ is $\lambda_r = cr^{-\omega}$ with $c >0.$ The sequence fulfills the condition to be in $ \ell^2 \setminus \ell^1$
for each $0.5 <\omega  \leq 1.$ We here used $\omega = 0.95$ and $c = 3.$
We observed only little differences when using different parameter pairs.
In order to quantitatively assess the denoising performance of total variation regularization on manifolds
we use the \emph{signal-to-noise ratio improvement} (compare \cite[p. 244]{unser2013introduction}).
We consider a manifold-valued version of the signal-to-noise ratio improvement which is given by
\[
	\deltaSNR = 10 \log_{10} \left(  \frac{\sum_{ij} d(g_{ij}, f_{ij})^2   }{\sum_{ij} d(g_{ij}, x_{ij})^2}\right).
\]
Here $f$ is the noisy data, $g$ the ground truth, and $x$ the regularized  restoration.

\subsection{The space of positive matrices $\Pos$ -- Diffusion tensor imaging}

Diffusion tensor imaging (DTI) is a non-invasive imaging modality based on nuclear magnetic resonance.
It allows to quantify the diffusional characteristics of a specimen \cite{basser1994mr, johansen2009diffusion}.
Applications are the determination of fiber tract orientations \cite{basser1994mr} and the detection of brain ischemia \cite{le2001diffusion}.
Denoising is an important topic in DTI which has been addressed in various articles, e.g.  \cite{chen2005noise, pennec2006riemannian, basu2006rician}. 

In DTI, the diffusivity of water molecules is captured by a \emph{diffusion tensor,} i.e.,
a (symmetric) positive (definite) $3 \times 3$ matrix $S(p)$ sitting at pixel $p$. 
It is reasonable to consider the space of diffusion tensors $\Pos$ as Riemannian manifold
with the Riemannian metric
$$
g_D(W,V) = \trace(D^{-\tfrac{1}{2}} W D^{-1} V D^{-\tfrac{1}{2}});
$$
see \cite{pennec2006riemannian}. 
Here the symmetric matrices $W,V$ represent tangent vectors in the point $D.$
Equipped with this Riemannian metric the space of positive matrices becomes a Cartan-Hadamard manifold.
Hence, by virtue of Theorem~\ref{thm:ConvergenceAlgA} and Theorem~\ref{eq:ConvergenceTheoremParaGeod}, 
the cyclic proximal point algorithm and both variants of the parallel  algorithm converge to a global minimizer.

For the space of positive matrices, the Riemannian exponential mapping $\exp_D$ is given by
\[
	\exp_D(W) = D^{\frac12} \exp(D^{-\frac12} W D^{-\frac12}) D^{\frac12}.
\]
Here $D$ is a positive matrix and the symmetric matrix $W$ represents a tangent vector in $D.$
The mapping $\exp$ is the matrix exponential.
Furthermore, there is also a closed form expression for the inverse of the Riemannian exponential mapping:
we have, for positive matrices $D,E,$
\[
	\exp^{-1}_D(E) = D^{\frac12} \log(D^{-\frac12} E D^{-\frac12}) D^{\frac12}.
\]
The matrix logarithm $\log$ is well-defined since the argument is a positive matrix.
The matrix exponential and logarithm can be efficiently computed by diagonalizing 
the symmetric matrix under consideration and then applying the scalar exponential and logarithm functions
to the eigenvalues.
The distance between $D$ and $E$ is just the length of the tangent vector $\exp^{-1}_D(E)$ which can be explicitly 
calculated by
\[
	\dist^2(D, E) = \sum_{l=1}^3 \log(\kappa_l)^2,
\]
where $\kappa_l$ is the $l^{\mathrm{th}}$ eigenvalue of the 
matrix $D^{-\frac12} E D^{-\frac12}.$

\begin{figure}
\tikzstyle{myspy}=[spy using outlines={green,lens={scale=4},width=0.98\textwidth, height=0.7\textwidth, connect spies, every spy on node/.append style={ultra thick}}]
\centering
\def\figfolder{./experiments/ex10/}
\def\figwidth{0.48\columnwidth}
\def\nodeSpy{(-1.2, 1)}
\def\nodeWindow{(3.0,-5.0)}
\begin{subfigure}[t]{\figwidth}
	\begin{tikzpicture}[myspy]
	\node {\includegraphics[interpolate=true,width=\textwidth]{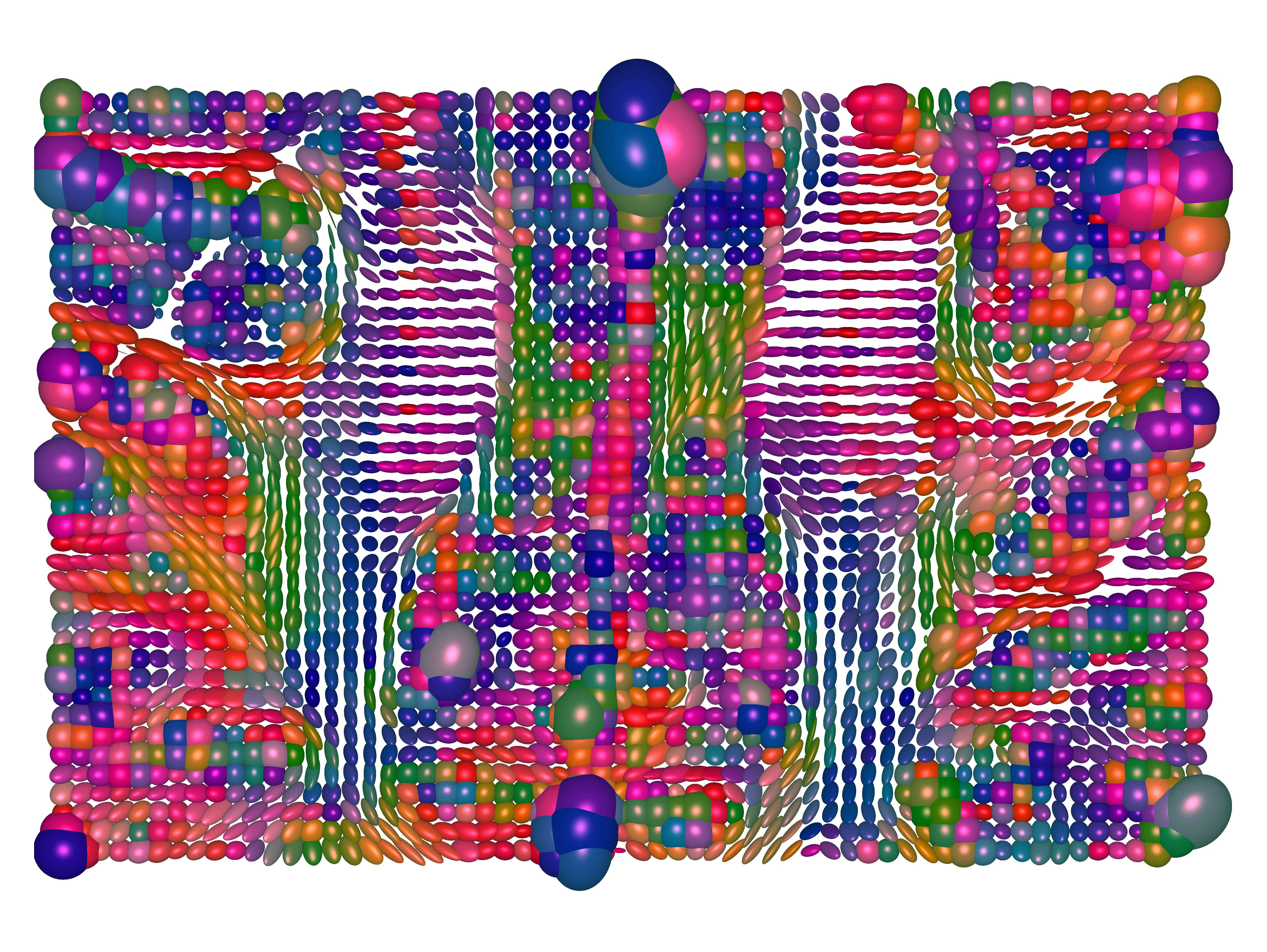}};
	\spy on \nodeSpy in node [left] at \nodeWindow;
\end{tikzpicture}
\end{subfigure}
\hfill
\begin{subfigure}[t]{\figwidth}
	\begin{tikzpicture}[myspy]
	\node {\includegraphics[interpolate=true,width=\textwidth]{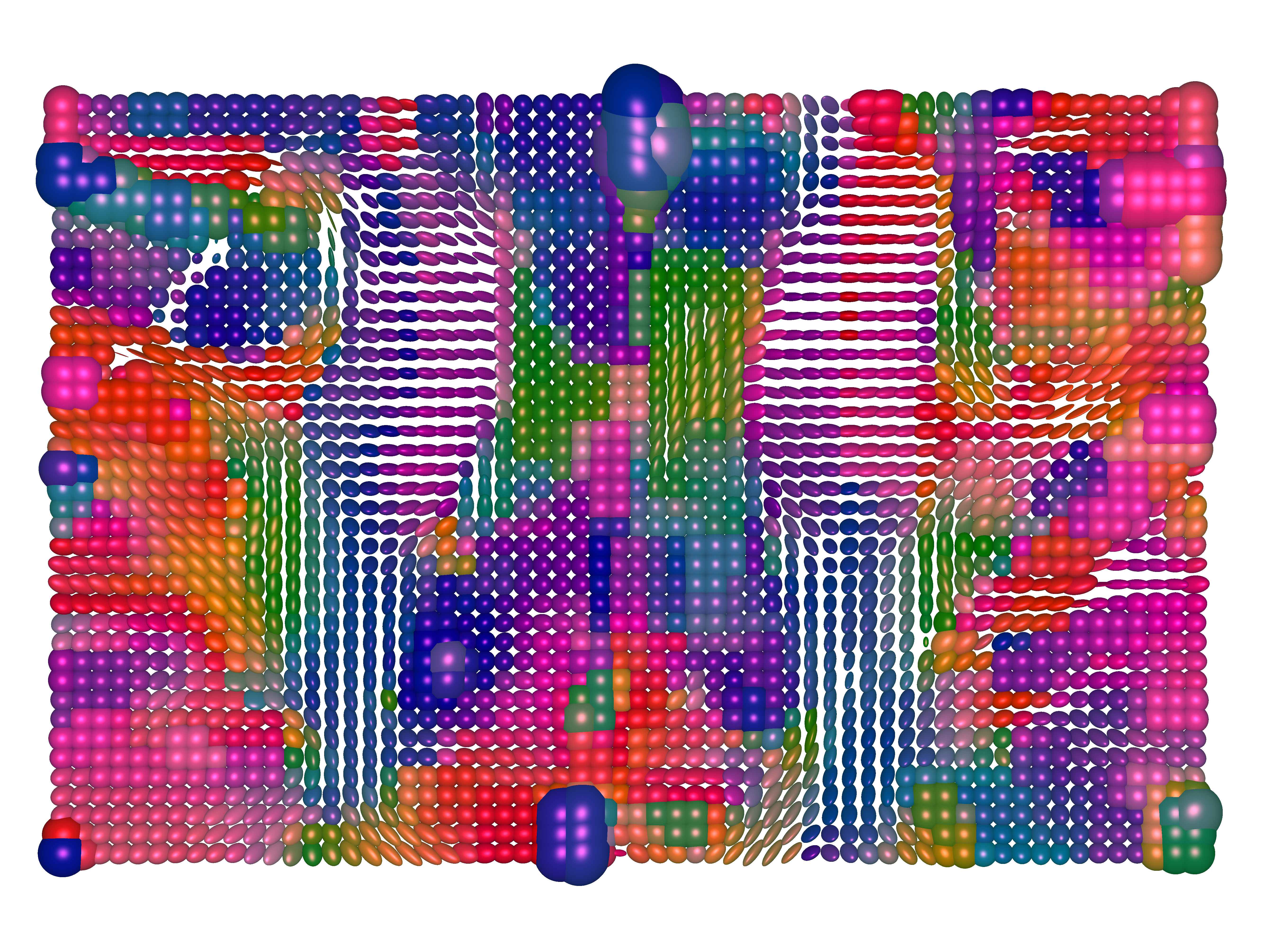}};
	\spy  on \nodeSpy in node [left] at \nodeWindow;
	\end{tikzpicture}
\end{subfigure}
\caption{\emph{Left:} Diffusion tensor image of a human brain (axial cut); \emph{Right:} Total variation denoising with $\ell^2$ data term using the cyclic proximal point algorithm (Algorithm \ref{alg:cyclic_splitting}) using $\alpha = \protect\input{\figfolder alpha.txt}.$ The runtime is $\protect\input{\figfolder runtime_BB_4000L2.txt}$ sec for $\protect\input{\figfolder NbIt.txt}$ iterations. 
The regularized image is much smoother than the original image. 
At the same time, strong changes of the orientations are preserved.}
\label{fig:dti_camino}
\end{figure}

\begin{figure}
\centering
\def\figfolder{./experiments/ex4/}
\def\figwidth{0.3\columnwidth}
\begin{subfigure}[t]{\figwidth}
	\includegraphics[width=\columnwidth, trim=65 4 65 10, clip]{\figfolder ex3_original.jpg}
	\subcaption{Synthetic DT image.}
\end{subfigure}
\hspace{0.05\textwidth}
\begin{subfigure}[t]{\figwidth}
	\includegraphics[width=\columnwidth, trim=65 4 65 10, clip]{\figfolder ex3_noisy_90.jpg}
		\subcaption{Rician noise with $\sigma = \protect\input{\figfolder noise_level.txt}.$}
\end{subfigure}
\\
\begin{subfigure}[t]{\figwidth}
	\includegraphics[width=\columnwidth,trim=65 4 65 10, clip]{\figfolder ex4_noisy_90_BB.jpg}
	\subcaption{Algorithm~\ref{alg:cyclic_splitting}
	(72.2 sec).}
\end{subfigure} \hfill
\begin{subfigure}[t]{\figwidth}
	\includegraphics[width=\columnwidth, trim=65 4 65 10, clip]{\figfolder ex4_noisy_90_parallel.jpg}
	\subcaption{Algorithm~\ref{alg:parallel_splitting} ($\protect\input{\figfolder runtime_parallel.txt}$ sec).}
\end{subfigure}
\hfill
\begin{subfigure}[t]{\figwidth}
	\includegraphics[width=\columnwidth, trim=65 4 65 10, clip]{\figfolder ex4_noisy_90_parallel_fast.jpg}
	\subcaption{Fast variant of Algorithm~\ref{alg:parallel_splitting} ($\protect\input{\figfolder runtime_parallel_fast.txt}$ sec).}
\end{subfigure}
\figspace
\caption{
 $\ell^2$-TV regularization  of a diffusion tensor image with high noise level.
 Algorithm~\ref{alg:cyclic_splitting} as well as Algorithm~\ref{alg:parallel_splitting} and its fast variant converge to the same solution. 
 The TV regularization ($\alpha = \protect\input{\figfolder alpha.txt}$) removes almost all the noise and it preserves the sharp transitions. 
  The signal-to-noise-ratio improvement is $\deltaSNR = \protect\input{\figfolder delta_snr_bb.txt}$ in all three cases.
 The numbers in brackets denote the CPU time for $\protect\input{\figfolder NbIt.txt}$ iterations.
}
\label{fig:dti_synthetic}
\end{figure}

The data actually measured in DTI 
are so-called diffusion weighted images (DWI) $D_v(p)$ which capture the directional diffusivity in the direction $v$ at pixel $p.$
The relation between the diffusion tensor $S(p)$ and the DWIs $D_v(p)$ at the pixel $p$ is given by the Stejskal-Tanner equation
\begin{equation} \label{eq:StejTan}
   D_v(p) =  A_0 e^{- b \ v^T S(p) v} 
\end{equation}
with  constants $b,A_0>0.$ Typically $b = 800$ and $A_0 = 1000. $
Usually, $6$ to $30$ diffusion weighted images $D_v$ (with different directions $v$) are measured \cite[3. IV C]{johansen2009diffusion}.
Being magnetic resonance images the DWIs are corrupted by Rician noise
which arises from complex-valued Gaussian noise in the original frequency domain measurements \cite{basu2006rician}.
This means that assuming the model \eqref{eq:StejTan} the actual measurement in direction $v$ at pixel $p$ is given by 
\[
D'_v(p) = \sqrt{(X+ D_v(p))^2 + Y^2},
\]
with the Gaussian random variables $ X,Y \sim N(0,\sigma^2).$ 
Typically, the tensor $D_v(p)$ is obtained from the DWIs via a least square fit using the Stejskal-Tanner equation \eqref{eq:StejTan}.
In our synthetic examples, 
we impose Rician noise to $15$ diffusion weighted images $D'_v$ obtained from a synthetic diffusion tensor image $S$ by \eqref{eq:StejTan}.
Then we apply least square fitting to the noisy DWIs to obtain a noisy diffusion tensor image.

In our experiments we visualize the diffusion tensors by the isosurfaces of the corresponding quadratic forms. More precisely, the ellipse visualizing
the diffusion tensor $S(p)$ at pixel $p$ are the points $x$ fulfilling $(x-p)^T S(p)(x-p) = c,$ for some $c>0.$ 

In Figure~\ref{fig:dti_camino}, we apply $\ell^2$-TV minimization to real DTI data of a human brain. 
The data set stems from the Camino project \cite{cook2006camino} and is freely accessible.
We observe that TV minimization removes noise and preserves 
sharp boundaries between oriented structures.

In Figure~\ref{fig:dti_synthetic} we apply Algorithm~\ref{alg:cyclic_splitting} as well as Algorithm~\ref{alg:parallel_splitting} and its fast variant 
for $\ell^2$-TV minimization to a synthetic DTI on which we impose Rician noise.
We observe the denoising capabilities of the proposed algorithms under a relatively high noise level; minimization of the $\ell^2$-TV functional almost completely removes the noise
while preserving sharp boundaries at the same time.

\subsection{The one-dimensional sphere $\mathrm{S}^1$ -- InSAR images}

Synthetic aperture radar (SAR) is a radar technique for
sensing the earth's surface from measurements taken by aircrafts or satellites. 
Interferometric synthetic aperture radar (InSAR) images
consist of the phase difference between two SAR images,
recording a region of interest either from two different angles of view or at two different points in times.
Important  applications of InSAR are the creation of accurate digital elevation models and the detection of terrain changes; cf.~\cite{massonnet1998radar,rocca1997overview}.

\begin{figure}
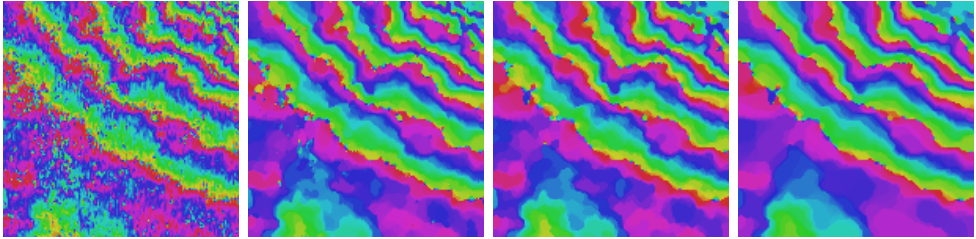

\def\figfolder{./experiments/SAR/ex2/}
	\def\figwidth{0.24\columnwidth}
\begin{subfigure}[t]{\figwidth}
	\includegraphics[width= \textwidth]{\figfolder vesuve_extract_color.png}
	\caption{InSAR image (real data).}
	\end{subfigure}
	\hfill
	\begin{subfigure}[t]{\figwidth}
	\includegraphics[width= \textwidth]{\figfolder vesuve_extract_l2_tv_color.png}
	\caption{$\ell^2$-TV  $(\alpha=\protect\input{\figfolder alpha_2.txt}).$}
	\end{subfigure}
	\hfill
	\begin{subfigure}[t]{\figwidth}
	\includegraphics[width= \textwidth]{\figfolder vesuve_extract_l1_tv_color.png}
	\caption{$\ell^1$-TV $(\alpha = \protect\input{\figfolder alpha_1.txt}).$}
		\end{subfigure}
	\hfill
	\begin{subfigure}[t]{\figwidth}
	\includegraphics[width= \textwidth]{\figfolder vesuve_extract_huber_color}
	\caption{TV with Huber data term $(\alpha = \protect\input{\figfolder alpha_huber.txt}).$}
		\end{subfigure}
		\figspace
			\caption{
		Total variation denoising of an InSAR image of dimension $150 \times 150.$ The $\mathrm{S}^1$-valued data are visualized as hue component in the HSV color space. 
Total variation minimization reliably removes the noise while preserving the structure of the image. We observe that $\ell^1$ and Huber data terms are slightly more robust to outliers.  In all three cases the runtime is about 20 sec 
for $\protect\input{\figfolder NbIt.txt}$ iterations.
}		
\label{fig:SAR}
\end{figure}
As InSAR data consists of phase values, the natural data space of InSAR images is the one-dimensional sphere $S^1.$
The exponential mapping and its inverse have a particularly simple form
when regarding  $S^1$ as unit circle in the complex plane.
Then the exponential mapping is given by 
\[
  \exp_{a}(v) = e^{i (\theta + v)},
\]
where $a = e^{i \theta}$ and $v \in ]-\pi;\pi[$. 
For two non-antipodal points $a$ and $b$ the inverse exponential map reads 
\[
  \exp^{-1}_{a}(b) = \arg (b/a),
\]
which is the polar angle of the complex number $b/a.$
 The distance between two points on the sphere reads
$d(a,b) = |\arg (b/a)|.$

In Figure  \ref{fig:SAR}, we apply total variation denoising 
to a real InSAR image taken from \cite{rocca1997overview}. 
This experiment shows the different effects of total variation regularization of using different data terms. We use $\ell^2$ and $\ell^1$ terms as well as the Huber term (with the parameters 
$\tau = \sqrt{2}$ and $\omega = 1$ in the definition of the Huber function \eqref{eq:huber}). 
We used $600$ iterations of the cyclic proximal point algorithm.
We observe that TV regularization reduces the noise significantly.
The $\ell^1$ data term and the Huber data term appear to be more robust to outliers than the $\ell^2$ data term.

\subsection{$\R^2 \times S^1$-valued images --  Denoising in LCh color space}
\label{sec:LCh}

It was observed that total variation based denoising may give better results when using certain non-flat color models instead of the classical RGB color space \cite{chan2001total}.
One of these non-flat models is the HSV color space which leads to cylindrical data living in the product space $\R^2 \times S^1.$ 

We here use the LCh color space.
Similar to the HSV space it is a cylindrical space consisting of a luminance component $L \in \R^+_0,$ a chroma component $C \in \R^+_0,$ and a hue component $h \in S^1.$
The difference between  HSV and LCh is that the first derives directly from the RGB space,
whereas the latter derives from the Lab color space (also called $L^*a^*b^*$ space)
which is intended to better match the human visual perception than the technically motivated RGB space.
We perform the color space conversions using Matlab's built-in functions.
For the hue and range preserving enhancement of color images see \cite{nikolova2013fast}.

The exponential and the logarithmic mappings are given componentwise by the respective mappings on
 $\R^2$ and $S^1$. Note that in spite of this separability property, the proposed algorithm 
 is not equivalent to performing  the algorithm on $\R^2$ and $S^1$ separately (except for $p$=$q$=$2$). 
 The reason is that the  
 path length calculated according to Table~\ref{tab:waylength_reg} and Table~\ref{tab:waylength_data} except for $p,q$=$2$ 
 depend nonlinearly on the distance in the product manifold. 

In Figure~\ref{fig:LCh} 
we compare denoising in the RGB space with denoising in the LCh space.
The RGB example was computed using the split Bregman method for TV denoising which is a
state-of-the-art method for vectorial total variation regularization \cite{goldstein2009split, getreuer2012rudin}.
We optimized the corresponding model parameter with respect to the peak signal to noise ratio (PSNR)
 given by 
   \begin{equation*}
	\mathrm{PSNR}(x) = 10 \log_{10}\left(  \frac{ 3 m n \cdot (\max_{i,j,k}   |g_{i,j,k}|)^2}{  \sum_{i,j,k} | g_{i,j,k} - x_{i,j,k} |^2} \right).
\end{equation*}
where $g \in \R^{n \times m \times 3}$ denotes the ground truth (in RGB space).
In Figure~\ref{fig:LCh} we observe that the LCh color space denoising can indeed lead to better results 
than the vectorial RGB denoising.

\begin{figure}
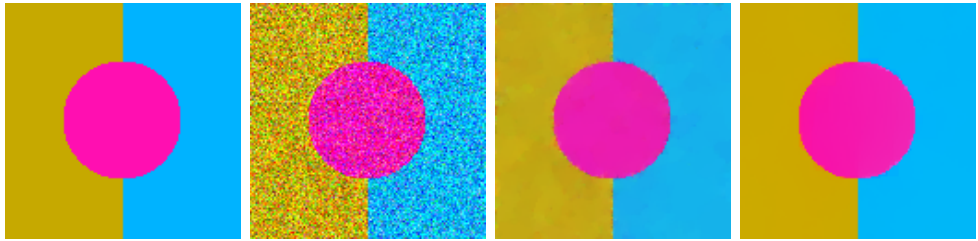

\def\figfolder{./experiments/ex16/}
	\def\figwidth{0.24\columnwidth}
\begin{subfigure}[t]{\figwidth}
	\includegraphics[width= \textwidth]{\figfolder ex16_original.png}
	\caption{Original image.}
	\end{subfigure}
		\hfill
	\begin{subfigure}[t]{\figwidth}
	\includegraphics[width= \textwidth]{\figfolder ex16_noisy.png}
	\caption{Gaussian noise (PSNR: 15.64).}
	\end{subfigure}
	\hfill
	\begin{subfigure}[t]{\figwidth}
		\includegraphics[width= \textwidth]{\figfolder ex16_res_split_bregman_l2.png}
 	\caption{$\ell^2$-TV in RGB space ($\alpha = \protect\input{\figfolder lambda_opt.txt}$, PSNR: $\protect\input{\figfolder max_psnr_split_bregman_l2.txt}$).}
	\end{subfigure}
		\hfill
	\begin{subfigure}[t]{\figwidth}
	\includegraphics[width= \textwidth]{\figfolder ex16_res_bb_l2.png}
	\caption{$\ell^2$-TV in LCh space ($\alpha = \protect\input{\figfolder alpha_l2.txt},$ PSNR: $\protect\input{\figfolder max_psnr_bb_l2.txt}$).}
	\end{subfigure}
	\figspace
	\caption{
	TV denoising in different color spaces.
	We see that measuring the distance in the non-flat LCh metric can lead to higher reconstruction quality 
	 than in the RGB color space. The runtime is 13 sec for $\protect\input{\figfolder NbIt.txt}$ iterations.
	}		
	\label{fig:LCh}
\end{figure}

\subsection{$S^2$-valued images}
We next apply our methods to images taking values in the two-dimensional sphere $S^2.$ 
For example, spherical data appear in image processing in the context of chromaticity-based color models 
\cite{chan2001total,vese2002numerical} and as orientation fields of three dimensional images \cite{rezakhaniha2012experimental}.

For a unit vector $a$ on the unit sphere $S^2$ in $\R^3$ and a non-zero tangent vector $v$ to the sphere at the point $a,$ the exponential mapping is given by 
\[
  \exp_{a}(v) =  a \cdot \cos \|v\|  + \frac{a \cdot\sin \|v\|}{\|v\|}.
\]
The inverse $\exp^{-1}_{a}$ of the exponential mapping is well defined for non-antipodal points $a$ and $b$ and given by 
\[
  \exp^{-1}_{a}(b) =  \arccos(\ps{a,b}) \cdot \frac{b-\ps{b,a}a}{\|b-\ps{b,a}a\|},
\]
where $\ps{\cdot,\cdot}$ denotes the standard inner product in $\R^3.$ 
The distance between $a$ and $b$ is 
$d(a,b) = \arccos(\ps{a,b}).$

We test the denoising potential of our algorithm on a (synthetic) spherical-valued image.
 In the context of directional statistics a popular noise model on $S^2$ uses the von Mises-Fisher distribution having the probability density 
  \[
    f(x) = c(\kappa) \exp (\kappa \ps{x,\mu}).
  \]
Here the parameter $\kappa >0$ expresses the concentration around the mean orientation $\mu \in S^2$ -- the higher $\kappa$, the more concentrated the distribution. 
The constant $c(\kappa)$ is used for normalization to obtain a probability measure. For each data point $x_{ij} \in S^2$, we consider the above distribution with $\mu = x_{ij}$ and draw a sample. 
For the simulation of the distribution we used the implementation \cite{jung2010randomgenerator}; see \cite{wood1994simulation} for a description of the algorithm. 
In Figure~\ref{fig:s2_exp}, we observe that the noise is almost completely removed by TV minimization and that the edges are retained.
\begin{figure}
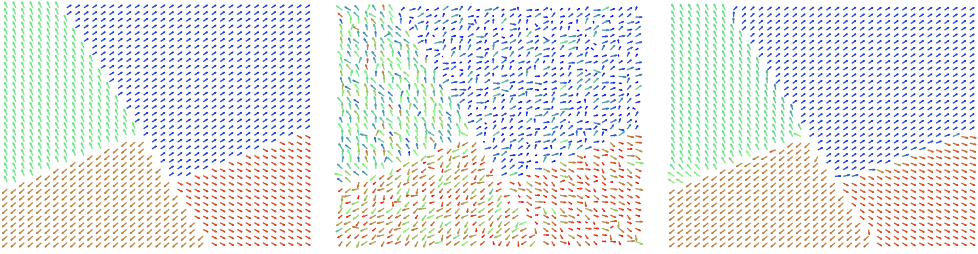

\def\figfolder{./experiments/S2/ex7_S2/}
	\def\figwidth{0.32\columnwidth}
	\includegraphics[width=\figwidth]{\figfolder S2_original-gen.pdf}
	\hfill
		\includegraphics[width=\figwidth]{\figfolder S2_noisy-gen.pdf}
		\hfill
\includegraphics[width=\figwidth]{\figfolder S2_rec-gen.pdf}
		\caption{
		Denoising of an $S^2$-valued image. The polar angle is coded both as length of the vectors and as color (red pointing towards the reader, blue away from the reader).
		\emph{Left:} Original;
\emph{Center:} Von Mises-Fisher noise of level $\kappa =\protect\input{\figfolder kappa.txt};$ 
\emph{Right:} $\ell^{\protect\input{\figfolder p.txt}}$-TV regularization
using $\alpha = \protect\input{\figfolder alpha.txt}.$ The noise is almost completely removed whereas the jumps are preserved ($\deltaSNR = \protect\input{\figfolder delta_snr_bb.txt}$).
The runtime is $\protect\input{\figfolder runtime_BB.txt}$ sec for $\protect\input{\figfolder NbIt.txt}$ iterations.
}	
\label{fig:s2_exp}			
\end{figure}

\subsection{SO(3) data}

Measurements which involve the orientations of rigid objects in three-dimensional space lead to data which take their values in the rotation group $\mathrm{SO}(3).$
Examples of $\mathrm{SO}(3)$-valued data are aircraft orientations \cite{rahman2005multiscale} and  protein alignments \cite{green2006bayesian}. 
They also appear in the context of tracking 3D rotational data arising in robotics \cite{drummond2002real}; see also \cite{moakher2002means} for connections with directional statistics.  
 
The special orthogonal group $\mathrm{SO}(3)$ consists of all orthogonal $3\times 3$ matrices with determinant one,
i.e. 
\[
  SO(3) = \left\{ Q \in GL(3) \;:\; Q^t Q = I_3, \det Q = 1  \right\}.
\] 
As usual for matrix groups, we only consider the tangent space to $\mathrm{SO}(3)$ in the identity matrix $I_3.$ It is given by the space of $3 \times 3$ skew-symmetric matrices $\mathrm{so}(3)$. 
Identifying the tangent space at an arbitrary point $P$ with the tangent space at $I_3$ (via the differential of the left group action) the exponential mapping  
$\exp_{P}: \mathrm{so}(3) \to \mathrm{SO}(3)$ in the point $P$ is given by 
\[
  \exp_{P}(W) = \exp(W)P.
\]
Here $\exp$ denotes the matrix exponential. For $P,Q \in SO(3)$, 
the ``inverse'' of the above exponential mapping reads
\[
  \exp^{-1}_{P}(Q) =  \log(QP^t),
\]
where $\log$ denotes the principal logarithm (which may be viewed as componentwise principal logarithm on the eigenvalues). The distance between $P$ and $Q$ equals
the Frobenius norm of $\log(QP^t).$ 

For the matrix operations needed above there are closed form expression available; see e.g. \cite{moakher2002means}.
More precisely, to compute the matrix exponential of a  skew-symmetric matrix $W$ we use the Rodrigues formula
\[
\exp(W) = 
  I_3 + \frac{\sin(a)}{a} W + \frac{1-\cos a}{a^2}  W^2,
  \quad \mbox{ for } a = \sqrt{\Tr(W^t W)} > 0   .
\]
For $a = 0,$ we have $\exp(W)=I_3.$ 
Concerning the principal matrix logarithm of a rotation matrix $P$, 
we let $\theta= \arccos (\Tr (P) -1)/2.$ If $\theta = 0,$ then $\log(P) = 0,$ the zero matrix.
For $|\theta|< \pi,$ the principal logarithm of $P$ is given by
\[
\log(P) = \frac{\theta \cdot Y}{2 \sin(\theta)}, \qquad \mbox{ where } Y = (P-P^t)/2.
\]

\begin{figure}
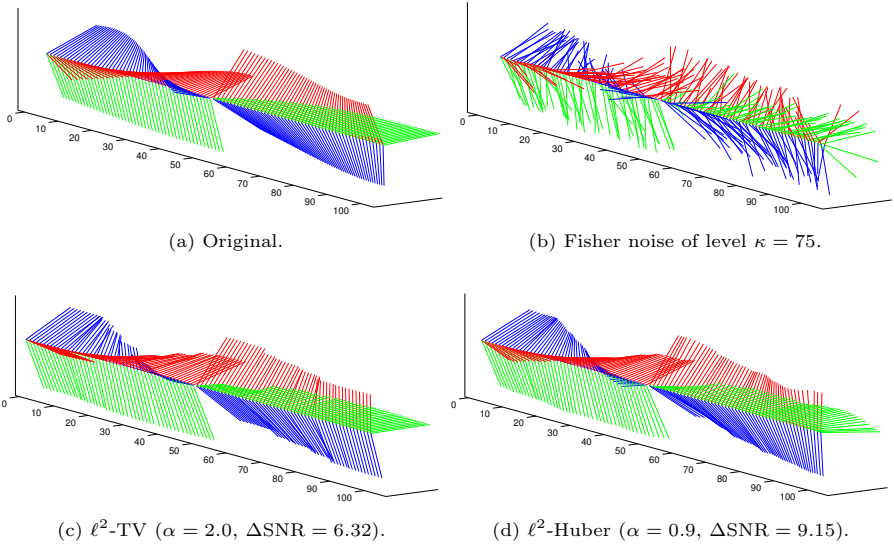

	\centering
	\def\figfolder{./experiments/SO3/ex6_SO3/}
	\def\figwidth{0.45\columnwidth}
	\begin{subfigure}[t]{\figwidth}
	\includegraphics[width=\textwidth]{\figfolder SO3_original-gen}
	\caption{Original.}
		\end{subfigure}
		\begin{subfigure}[t]{\figwidth}
	\includegraphics[width=\textwidth]{\figfolder SO3_noisy-gen}
	\caption{Fisher noise of level $\kappa = 75.$}
		\end{subfigure}
	\\
		\begin{subfigure}[t]{\figwidth}
	\includegraphics[width=\textwidth]{\figfolder SO3_L2_TV-gen}
	\caption{$\ell^2$-TV 
($\alpha = \protect\input{\figfolder alpha_TV.txt},$
 $\deltaSNR = \protect\input{\figfolder delta_snr.txt}$).
}
\end{subfigure}
		\begin{subfigure}[t]{\figwidth}
	\includegraphics[width=\textwidth]{\figfolder SO3_L2_Huber-gen}
	\caption{$\ell^2$-Huber 
 ($\alpha = \protect\input{\figfolder alpha_Huber.txt},$  
 $\deltaSNR = \protect\input{\figfolder delta_snr_huber.txt}$).}
			\end{subfigure}
			\figspace
\caption{Denoising of a $\mathrm{SO}(3)$-valued time-series;
 TV regularization removes the noise and  preserves the jump. 
 The Huber regularization term gives even better results with less staircasing effects. The runtimes for TV and Huber regularization are 
 $\protect\input{\figfolder runtime.txt}$ sec and $\protect\input{\figfolder runtime_huber.txt}$ sec, respectively, for $\protect\input{\figfolder NbIt.txt}$ iterations.
}
\label{fig:so3}
\end{figure}

In Figure~\ref{fig:so3}, we consider a synthetic $1D$ signal consisting of $130$ rotation matrices (visualized as tripods). 
The signal varies smoothly except for a single jump at the $50^{th}$ matrix. 
We simulate noisy data using the matrix Fisher distribution  \cite{khatri1977mises}
which is given by the density 
\[
  f(X) = c_F \exp \left\{ \Tr (F^t X)\right\}, \; X \in SO(3).
\]
The matrix $F$ is a fixed $3 \times 3$ parameter matrix which describes the mode and the concentration of the distribution. 
In the isotropic case, the concentration of the distribution can be described by a single parameter $\kappa > 0$ which can be regarded as noise level. 
A small value of $\kappa$ corresponds to a high level of noise. 
Our simulation uses a method recently introduced in  \cite{ganeiber2012estimation} and relies on the sampling of quaternions following a related  Bingham distribution. To simulate the latter we used the code from \cite{brubaker2012family} which implements the method in \cite{hoff2009simulation}. For details we refer to \cite[Chapter 5]{ganeiber2012estimation}. 

The results in Figure~\ref{fig:so3} show that the proposed algorithm removes the noise. The resulting signal is smoothed 
while the jump is preserved.
In that experiment, we also compare total variation with Huber regularization terms. 
We see that the Huber regularization exhibits less staircasing artifacts than TV regularization.

\section{Conclusion and future research}
In this article we have developed proximal point algorithms for total variation minimization for manifold-valued data.   
Our experiments show the denoising capability of the developed algorithms in various manifolds appearing in applications. 
For Hadamard spaces, we obtain convergence towards a global minimizer of the TV functional.

In future work, we address Blake-Zisserman and Potts functionals for manifold-valued data. 

\section*{Acknowledgement}
This work was supported by the German Federal Ministry for Education and Research under SysTec Grant 0315508 and by the
European Research Council (ERC) under the European Union's Seventh Framework Programme (FP7/2007-2013) / ERC grant agreement no.~267439.
The authors would like to thank Gabriele Steidl for valuable discussions and for pointing out the PPXA algorithm in \cite{combettes2011proximal} 
which led us to consider the parallel proximal algorithms in this paper.
They would also like to thank Klaus Hahn for valuable discussions.

\bibliographystyle{plain}
\bibliography{tvForManifoldBib}

\end{document}